\newtheorem{teo}{Theorem}[section]
\newtheorem*{teo*}{teo}
\newtheorem{lem}[teo]{Lemma}
\newtheorem{pro}[teo]{Proposition}
\newtheorem{nota}[teo]{Notation}
\theoremstyle{definition}
\newtheorem{fed}[teo]{Definition}
\theoremstyle{remark}
\newtheorem{rem}[teo]{Remark}
\def\coma{\, , \, }
\def\py{\peso{and}}
\newcommand{\peso}[1]{ \quad \text{ #1 } \quad }
\def\n0{n_{ \text{\rm \tiny o}}}
\def\noi{\noindent}
\def\EOE{\hfill $\triangle$}
\def\bm{\left[\begin{array}}
\def\em{\end{array}\right]}
\def\ben{\begin{enumerate}}
\def\een{\end{enumerate}}
\def\bit{\begin{itemize}}
\def\eit{\end{itemize}}
\def\barr{\begin{array}}
\def\earr{\end{array}}
\def\R{\mathbb{R}}
\def\C{\mathbb{C}}
\def\K{\mathbb{K}}
\def\cH{\mathcal{H}}
\def\cK{\mathcal{K}}
\def\cP{\mathcal{P}}
\def\cR{{\cal R}}
\def\cS{{\cal S}}
\def\cT{{\cal T}}
\def\cV{{\cal V}}
\def\cU{{\cal U}}
\def\cW{{\cal W}}
\def\cX{\mathcal{X}}
\def\beq{\begin{equation}}
\def\eeq{\end{equation}}
\def\pausa{\medskip\noi}
\begin{document}
\title{Dominant subspace and low-rank approximations from block Krylov subspaces without a prescribed gap}
\author{Pedro Massey
 \thanks{Partially supported by 
CONICET (112202101 00954CO) and UNLP (11X974). e-mail:{massey@mate.unlp.edu.ar}}}
\date{CMaLP, FCE-UNLP and IAM-CONICET, Buenos Aires, Argentina }
\maketitle

\begin{abstract}
We develop a novel convergence analysis of the classical deterministic block Krylov methods 
for the approximation of 
$h$-dimensional dominant subspaces and low-rank approximations of matrices
$ A\in\mathbb K^{m\times n}$ (where $\mathbb K=\mathbb R$ or $\mathbb C)$ 
in the case that there is no singular gap at the index $h$ i.e., if $\sigma_h=\sigma_{h+1}$ (where $\sigma_1\geq \ldots\geq \sigma_p\geq 0$ denote the singular values of $ A$, and $p=\min\{m,n\}$). 
Indeed, starting with a (deterministic) matrix $ X\in\mathbb K^{n\times r}$ with $r\geq h$ satisfying a compatibility assumption with some $h$-dimensional right dominant subspace of $A$, we show that block Krylov methods produce arbitrarily good approximations for both problems mentioned above. Our approach is based on recent work by Drineas, Ipsen, Kontopoulou and Magdon-Ismail on the approximation of structural left dominant subspaces. The main difference between our work and previous work on this topic is that instead of exploiting a singular gap at the prescribed index $h$ (which is zero in this case) we exploit the nearest existing singular gaps. 
\end{abstract}

\bigskip

{\bf Keywords.} Dominant subspaces, low-rank approximation, singular value decomposition, principal angles.

\medskip

{\bf AMS subject classifications.}  15A18, 65F55. 
 
%\tableofcontents

\section{Introduction}\label{sec intro}

Low-rank matrix approximation  is a central problem in numerical linear algebra (see \cite{Saad11}). It is well known that truncated singular value decompositions (SVDs) of a matrix $ A\in\K^{m\times n}$ 
(for $\K=\R$ or $\K=\C$) produce optimal solutions to this problem (\cite{Bhatia,GVL13,HJ91,Saad11}). Indeed, let $ A= U \Sigma V^*$ be a SVD and let 
$\sigma_1\geq \ldots\geq \sigma_p\geq 0$ be the singular values of $ A$, where $p=\min\{m,n\}$. 
Given $1\leq h\leq \text{rank}( A)$, recall that 
a truncated SVD of $ A$ is given by $ A_h= U_h \Sigma_h V_h^*$, 
where the columns of $ U_h$ and $ V_h$ are the top $h$ columns of
$ U$ and $ V$, respectively, and $ \Sigma_h$ is the diagonal matrix with main diagonal given by $\sigma_1,\ldots,\sigma_h$. In this case, we have that
$\| A- A_h\|_{2,F}\leq \| A- B\|_{2,F}$ for every $ B\in\K^{m\times n}$ with $\text{rank}( B)\leq h$, where $\|\cdot\|_{2,F}$ stand for spectral and Frobenius norms respectively. Nevertheless, it is well known that (in general) computation of a SVD of a matrix is expensive. This motivates the efficient numerical computation of approximations of truncated SVDs of matrices \cite{D19,Gu15,HMT11,MM15,Saad11,WZZ15,Wood14}.

A closer look at the optimal approximations shows that 
they can be described as $ A_h= P_h A$, where $ P_h\in\K^{m\times m}$ is the orthogonal projection onto the range $R(U_h)=\mathcal U_h$, spanned by the top $h$ columns of $ U$. Hence, one of the main strategies for computing low-rank approximations is the computation of convenient $h$-dimensional subspaces $\cT\subset\K^m$ and considering 
the corresponding low rank approximations. 
% that are close to a {\it left dominant subspace} $\cU_h$ of $A$ corresponding to some SVD of $A$.
There are several methods for the efficient computation of low-rank approximations of the form $PA$ for an orthogonal projection $P\in\K^{m\times m}$, based on the construction of convenient $h$-dimensional subspaces $R(P)=\cT$ (equivalently, orthonormal sets of $h$ vectors).  Among others, implementations of the subspace iteration (or block power) and block Krylov methods have become very popular \cite{D19,GVL13,Gu15,MM15,Saad11,Sai19,WZZ15}. Yet, even if $PA$ is a good low-rank approximation of $A$ (that is, if $\|PA-A\|_2\approx \sigma_{h+1}$), $PA$ and $A$ might not share some other features. For example, the singular values $\sigma_i(PA)$ and $\sigma_i(A)$ might be quite different for some $1\leq i\leq h$. It could also be the case that 
the range $R(P A)\subset \K^m$ is not close to any $h$-dimensional left dominant subspace $\cU_h\subset \K^m$ of $A$; here, the distance between subspaces is measured in terms of the principal angles between them (see \cite{MM15}). Thus, to derive low-rank approximations that share some other features with $A$, it seems natural to consider first the construction of subspaces $\cT$ that are close to the subspaces $\cU_h$. Once this is achieved, these subspaces can be used to construct approximated truncated SVDs of the form $P\,A$ that do share some other features with $A$. Moreover, the subspaces $\cT$ themselves are also relevant in the study of principal component analysis \cite{Jolli86} (since they are approximations of the principal components $\cU_h$) and for the
estimation of the leverage scores for the construction of CUR decompositions 
(see the recent survey \cite{DPDM23} and the references therein). 

As opposed to the low-rank approximation problem of the matrix $A$ (see \cite{DI19}), the singular gap $\sigma_h-\sigma_{h+1}\geq 0$ plays a role in the approximation of the $h$-dimensional left dominant subspace $\cU_h$ of $A$. Indeed, if the singular gap is positive then the  condition number of computing the (uniquely determined) left dominant subspace $\cU_h$ of $A$ depends (up to small factors) on the inverse of the singular gap $(\sigma_h-\sigma_{h+1})^{-1}$, irrespectively of the method being used to approximate $\cU_h$ (see \cite{Sun96,Vann22}). This fact is reflected in the convergence analysis of deterministic iterative methods used to compute approximations 
of $\cU_h$, as those considered in \cite{D19,Sai19,WZZ15}; indeed, in the deterministic setting, the upper bounds for the principal angles between $\cU_h$ and its approximations obtained in the previous works become arbitrarily large as the singular gap $\sigma_h-\sigma_{h+1}>0$ tends to $0$. 
Furthermore, in case the singular gap is zero then we get an infinite family of $h$-dimensional 
left dominant subspaces of $A$; this introduces a new problem, 
that is {\it choosing} a convenient $h$-dimensional left dominant subspace of $A$ to approximate.

The previous facts could lead us to believe that the problem of approximating $h$-dimensional left dominant subspaces of $A$ when the singular gap $\sigma_h-\sigma_{h+1}=0$ is not well posed for its analysis in the deterministic setting. In this manuscript, we challenge this belief for the block Krylov iterative method. Indeed, in case $\sigma_h=\sigma_{h+1}$ we show that the existence of infinitely many $h$-dimensional left dominant subspaces of $A$ can be used to our advantage, by developing a method for choosing a convenient such dominant subspace that is close to the subspace obtained by the block Krylov method. To do this we adapt some of the main ideas of \cite{D19}, to deal with the approximation of left dominant subspaces. Indeed, we consider a starting guess matrix $ X\in\K^{n\times r}$ that satisfies some compatibility assumptions with $ A$, which can always be achieved even with $r=h$ (i.e., for a minimal choice of $r$).
Our approach is based on enclosing $\sigma_j>\sigma_{j+1}=\sigma_h=\sigma_k>\sigma_{k+1}$ in such a way that $j<h$ and $k\geq h$ are the nearest indices  for which there are singular gaps. 
These gaps appear explicitly in the upper bounds related to our convergence analysis of block Krylov methods.  In this context, we show that block Krylov subspaces produce arbitrarily good $h$-dimensional approximations of conveniently chosen left and right $h$-dominant subspaces. Moreover, we show that block Krylov spaces can also be used to compute 
 arbitrarily good approximations of $A$ of rank at most $h$ that share some other features with $A$, even when the singular gap $\sigma_h-\sigma_{h+1}=0$ 
(see Section \ref{sec dom subs4} for a detailed description of the problems mentioned above).  
 Thus, our results complement the convergence analysis in \cite{D19}.
Besides the fact that the no singular gap
case is of interest (due to the common occurrence of repeated singular values in applications with some degree of symmetry), we believe that our 
approach to the convergence analysis of block Krylov iterative methods can be extended in different directions, not only to cover the convergence analysis of other methods but also to allow for a more general understanding of subspace approximation problems (e.g. for clustered singular values).

The paper is organized as follows. In Section \ref{sec new prelis} we recall the notions of principal angles between subspaces, dominant subspaces and their relation to SVDs and 
we describe the context and main problems considered in this work. 
In Section \ref{sec main results} we state our main results on $h$-dimensional dominant subspace approximations (Section \ref{sec main probs uno}) and low-rank approximations by matrices of rank $h$ (Section \ref{sec low rank approx} ) when there is no singular gap at the index $h$. At the end of Section, we include some discussion
on how is that our approach can be adapted to deal with more general situations (such as the clustered singular value case). In Section \ref{sec proofs} we present the proofs of the results described in Section \ref{sec main results}; some of these proofs require some technical facts that we consider in Section \ref{apendixity} (Appendix). 

\section{Preliminaries and description of the main context}\label{sec new prelis}

We begin by recalling some geometric notions that play a central role
in the convergence analysis of iterative algorithms. Then, we describe 
the context and problems that are the main motivation of our work.

\subsection{Principal angles between subspaces}\label{sec aux angles}

 Let $\cS,\,\cT\subset \K^n$ be two subspaces of dimensions $s$ and $t$ respectively. Let $ S\in \K^{n\times s}$ and $ T\in\K^{n\times t}$ be isometries (i.e., matrices with orthonormal columns) such that $R( S)=\cS$ and $R( T)=\cT$. Following \cite{GVL13}, we define the principal angles between $\cS$ and $\cT$, denoted 
$$0\leq \theta_1(\cS,\cT)\leq \ldots\leq \theta_k(\cS,\cT)\leq \frac{\pi}{2} \peso{where} k=\min\{s,\,t\}\,, $$
determined by the identities $\cos(\theta_{i}(\cS,\cT))=\sigma_i( S^* T)=\sigma_i( T^* S)$, for $1\leq i\leq k$; in this case the roles of $ S$ and $ T$ are symmetric.
If we assume that $s\leq t$ (so $k=s$) the principal angles can be also determined in terms of the identities 
\begin{equation}\label{eq sobre sen ang princ1}
\sin(\theta_{s-i+1}(\cS,\cT))=\sigma_i( ( I-  T T^*) S)=\sigma_i( ( I-  T T^*) S  S^*)=\sigma_i(( I- P_\cT) P_\cS)
\end{equation}
for $1\leq i\leq s$,  where $ P_\cH\in\K^{n\times n}$ denotes the orthogonal projection onto a subspace $\cH\subset \K^n$; it is worth noticing that in this last case the roles of $ S$ and $ T$ (equivalently the roles of $ P_\cS$ and $ P_\cT$) are not symmetric (unless $s=t$). 
Principal angles can be considered as a vector valued measure of the distance between the subspaces $\cS$ and $\cT$. Following \cite{SS90} we let $ \Theta(\cS,\cT)=\text{diag}(\theta_1(\cS,\cT),\ldots,\theta_s(\cS,\cT))$ denote the diagonal matrix with the principal angles in its main diagonal. As a consequence of the previous facts we get that 
$$\|\sin  \Theta(\cS,\cT)\|_{2,F}=\|( I- P_\cT) P_\cS\|_{2,F}
$$ are (scalar measures of) the {\it angular distances} between $\cS$ and $\cT$ (see 
\cite{GVL13,SS90}).

\subsection{Dominant subspaces }\label{sec dom subs2} 

We begin with a formal description of the class of dominant subspaces of a matrix, without assuming a singular gap. 
Let $A\in\K^{m\times n}$ and let $\sigma_1\geq \ldots  \geq \sigma_p\geq 0$, where $p=\min\{m,\,n\}$, denote its singular values. Let $\cS'\subset \K^m$ be a subspace of dimension $1\leq h\leq \text{rank}( A)\leq p$. We say that $\cS'$ is a {\it left dominant subspace} for 
$ A$ if $\cS'$ admits an orthonormal basis $\{ w_1,\ldots, w_h\}$ such that 
$ A  A ^* w_i=\sigma_i^2\,  w_i$, for $1\leq i\leq h$.
Equivalently, $\cS'$ is a left dominant subspace for 
$ A$ if the $h$ largest singular values of $  P_{\cS'}   A$ are $\sigma_1\geq  \ldots\geq \sigma_h$. In this case we have that 
$$
\| P_{\cS'}  A -   A\|\leq \| Q  A -  A\|
$$ for every orthogonal projection $ Q\in\K^{m\times m}$ with rank$( Q)=h$ and every unitarily invariant norm; that is, $ P_{\cS'} A$ is an optimal low-rank approximation of $ A$ (see \cite[Section IV.3]{Bhatia}).

On the other hand, we say that $\cS\subset \K^n$ is a {\it right dominant subspace} for $ A$ if
$\cS$ admits an orthonormal basis $\{ z_1,\ldots, z_h\}$ such that 
$ A^*  A z_i=\sigma_i^2 z_i$, for $1\leq i\leq h$.
 Similar remarks apply also to right dominant subspaces. It is interesting to notice that 
the class of $h$-dimensional left dominant subspaces of $ A$ coincides with
the class of $h$-dimensional  right dominant subspaces of $ A^*$; in what follows we will make use of this fact.

\subsection{Dominant subspaces and SVDs}\label{sec dom subs3} 

  Let $ A=  U  \Sigma  V^*$ be a full SVD for $ A\in\K^{m\times n}$, where $\K=\R$ or $\K=\C$, 
$ \Sigma\in \R^{m\times n}$ and $ U\in\K^{m\times m}$ and $ V\in\K^{n\times n}$ are unitary (orthogonal when $\K=\R$) matrices. In this case $\Sigma$ is a (rectangular) diagonal matrix, with diagonal entries given by the singular values of $A$. In what follows we let $  u_j$ (respectively $  v_j$) denote the columns of $ U$ (respectively of $ V$).

Given $1\leq h\leq m$, we define the subspace 
$\cU_h=\text{Span}\{ u_1,\ldots, u_h\}\subset \K^m$; similarly, if $1\leq h\leq n$, we let 
$\cV_h=\text{Span}\{ v_1,\ldots, v_h\}\subset \K^n$. Then, $\cU_h$ and $\cV_h$ are left and right dominant subspaces respectively. 
In case $\sigma_h>\sigma_{h+1}$ then it is well known that the left (respectively right) dominant subspace for $ A$ of dimension $h$ is uniquely determined; hence, in this case $\cU_h$ and $\cV_h$ do not depend on our particular choice of SVD for $ A$. 

On the other hand, if $\sigma_h=\sigma_{h+1}$ then we have a continuum class of 
$h$-dimensional left dominant subspaces: indeed, let 
$0\leq j=j(h)< h<k=k(h)$ be given by
$j(h)=\max\{ 0\leq \ell < h\ : \ \sigma_\ell> \sigma_h\}$, where we set $\sigma_0=+\infty$ and 
$k=k(h)=\max\{ 1\leq \ell\leq \text{rank}( A) \, : \, \sigma_\ell=\sigma_h\}$. 
If we further let $\cU_0=\{0\}$ then, it is straightforward to check that an $h$-dimensional subspace $\cS'$ is a left dominant subspace for $ A$ if and only if there exists an $(h-j)$-dimensional subspace $\cU\subset \cU_k\ominus \cU_j:=\cU_k\cap \cU_j^\perp\subset \K^m$ such that $\cS'=\cU_j\oplus \cU$.
Therefore, we have a natural parametrization of $h$-dimensional left dominant subspaces in terms of 
subspaces $\cU$ that vary over
the Grassmann manifold  of $(h-j)$-dimensional subspaces of $\cU_k\ominus \cU_j\subset \K^m$.

It is a basic fact in linear algebra that given $\cS'$ a left dominant subspace of dimension $h\geq 1$,  there exists a SVD, $ A=  U  \Sigma  V^*$ such that $\cS'=\cU_h$ i.e., the subspace spanned by the top $h$ columns of $ U$; and a similar fact also holds for 
right dominant subspaces.

\subsection{Main problems considered in this work}\label{sec dom subs4} 

Throughout the rest of the paper we consider the following block Krylov algorithm for approximation of left dominant subspaces and computation of low-rank approximations (see \cite{D19}).

\smallskip

\begin{algorithm}
\caption{(Block Krylov algorithm for left dominant subspace and low-rank approximation)}\label{algoalgo}
\centerline{
}
\begin{algorithmic}[1]
\REQUIRE  $A\in\K^{m\times n}$, starting guess $ X\in \K^{n\times r}$; target rank $h\leq \text{rank}(A)$; power $\ell\geq 0$.\\
$\quad~$ Set 
\begin{equation} \label{eq defi Krylov2}
 K_\ell=K_\ell(A,X):=( A X\quad  ( A A^*) A X
\quad \cdots  \quad  ( A A^*)^\ell A X)\in\K^{m\times (\ell+1)\cdot r}
\end{equation}
$$ \cK_\ell=\cK_\ell(A,X):=R(K_\ell)\subset \mathbb K^m\,.$$
$\quad~$  Test that $d:=\dim \cK_\ell\geq h$. In this case:
\ENSURE $\hat{ U_h}\in\K^{m\times h}$ with orthonormal columns
\STATE Compute an orthonormal basis $ U_K\in\K^{m\times d}$ for $\cK_\ell$.
\STATE Set $ W= U_K^*  A\in \K^{d\times n}$ (notice that rank$( W)\geq h$).
\STATE Compute $ U_{W,h}\in \K^{d\times h}$ isometry, such that $R( U_{W,h})$ is a left dominant subspace of $ W$.
\STATE Return: $U_K\in \mathbb K^{m\times d}$ and $\hat { U}_h= U_K\, U_{W,h}\in \K^{m\times h}$.
\end{algorithmic}
\end{algorithm}

\smallskip

Once Algorithm \ref{algoalgo} is performed, we consider the output matrices $U_K$ and $\hat U_h$. 
We assume further that $\sigma_h=\sigma_{h+1}$; 
 in this setting, our first main problem is to show the existence of some $h$-dimensional subspace  $\cT\subseteq \cK_\ell$ that is close to {\it some}
$h$-dimensional left dominant subspace $\cU_h$ of $A$. In this context, proximity between subspaces is measured by $\|\sin \Theta(\cU_h,\cT)\|_{2,F}$ i.e., in terms of (the spectral or Frobenius norm of) the sines of the principal angles between the subspaces $\cU_h$ and $\cT$ (see Section \ref{sec main probs uno}). Once we establish the existence of $\cT\subseteq \cK_\ell$ as above, we get the low-rank approximation $P_{\cT}A$ of $A$, where $P_{\cT}$ denotes the orthogonal projection onto $\cT$. We point out that the previous approach does not provide an effective way (algorithm) to compute $\cT$.

Therefore our second main problem is to obtain upper bounds for the approximation error $\|A-\hat A_h\|_{2,F}$, for the low-rank matrix $\hat A_h=\hat U_h \hat U_h^* A$ computed in terms of the output of Algorithm \ref{algoalgo}.
Further, we require that the upper bound for the  approximation error of $A$ by $\hat A_h$ becomes arbitrarily close to $\|A-A_h\|_{2,F}$ (optimal approximation error as described at the beginning of Section \ref{sec intro}) as the power $\ell$ increases. 
 Hence, by solving this second problem, we obtain (in an effective way) the low-rank approximation $\hat A_h$ of $A$ (see Section \ref{sec low rank approx}) that behaves much like the optimal low-rank approximations $A_h$ of $A$. 

In the case that there is a singular gap i.e., $\sigma_h>\sigma_{h+1}$, these problems have been recently solved in \cite{D19}. In this work we adapt the approach considered in \cite{D19} to construct approximations of dominant spaces and low-rank approximation of $A$, based on the block Krylov subspaces $\cK_\ell$,
in the case that there is no singular gap at the index $h$. 

\section{Main results}\label{sec main results}

In this section, we state our main results related to dominant subspace approximations and low-rank matrix approximations in terms of block Krylov subspaces. The proofs of these results are considered in Section \ref{sec proofs}. At the end of this section, we include some comments and further research problems related to the present work.

Our results are motivated by the recent work of P. Drineas, I.C.F. Ipsen, E.M. Kontopoulou and M. Magdon-Ismail \cite{D19}. In that work, the authors merged a series of
techniques, tools and arguments that lead to structural results related to 
the approximation of dominant subspaces from block Krylov spaces in the presence of a singular gap. 
The convergence analysis obtained in \cite{D19} has a deep influence in our present work; indeed, we shall follow some of the lines developed in that work, that we refer to as the \textit{DIKM-I theory}. Of course, at some points, we have to depart from those arguments to deal with the no-singular-gap case.

\subsection{Approximation of dominant subspaces by block Krylov spaces}\label{sec main probs uno}

In what follows we consider the convergence analysis of the block Krylov iterative method for approximating left dominant subspaces of a matrix $A$ (Algorithm \ref{algoalgo}). 
Once the Algorithm \ref{algoalgo} is performed, we describe the output matrix $\hat { U}_h$ in terms of its columns $\hat { U}_h=(\hat { u}_1,\ldots,\hat { u}_h)$. We also consider the matrices $\hat { U}_i=(\hat { u}_1,\ldots,\hat { u}_i)\in\K^{m\times i}$, for $1\leq i\leq h$.

 As before, let $ A\in\K^{m\times n}$ with singular values $\sigma_1\geq \ldots\geq \sigma_p$, for $p=\min\{m,n\}$. Given $1\leq h\leq \text{rank}( A)\leq p$, we let 
$0\leq j(h)< h$ be given by
\beq\label{eq defi j}
j=j(h)=\max\{ 0\leq \ell < h\ : \ \sigma_\ell> \sigma_h\}\eeq where we set $\sigma_0=+\infty$ and 
\beq \label{eq defi k}
k=k(h)=\max\{ 1\leq \ell\leq \text{rank}( A) \, : \, \sigma_\ell=\sigma_h\}\,.\eeq
Since $h\leq \text{rank}( A)$, we get that $\sigma_k=\sigma_h>0$. 
As mentioned in the preceding sections, we will focus on the case when $h<k$ (i.e., when $\sigma_h=\sigma_{h+1}$). Moreover, we will further assume that $h\leq r<k$ so that $\sigma_h=\sigma_r=\sigma_k$) where $X\in\mathbb K^{n\times r}$ is the starting guess in Algorithm \ref{algoalgo}; otherwise (if $k\leq r$) we could simply apply the DIKM-I theory, using the singular gap $\sigma_k>\sigma_{k+1}$ (since, in the generic case, $X$ is full rank and satisfies the compatibility hypothesis of the DIKM-I theory).
Let $A=U\Sigma V^*$ be a full SVD of $A$.
In case $1\leq k< p$ then
we consider the following partitioning of $U$, $\Sigma$ and $V$ 
$$\Sigma=\begin{pmatrix}  \Sigma_k & \\ &  \Sigma_{k,\perp}\end{pmatrix}\ , \ \ 
U=\begin{pmatrix}  U_k & 
 U_{k,\perp}\end{pmatrix}\ , \ \ 
V=\begin{pmatrix}  V_k & 
V_{k,\perp}\end{pmatrix}\,.$$

Algorithm \ref{algoalgo} will provide a reasonable output as long as the starting guess matrix 
satisfies certain compatibility conditions with $A$. 
Hence, we consider the following

\begin{fed}Given $ X\in\K^{n\times r}$  we say that $( A, X)$ is {\it $h$-compatible} if there is an $h$-dimensional right dominant subspace $\cS\subset \K^n$ for $ A$, with $$ \Theta(\cS,R( X))<\frac{\pi}{2}\, I\,.$$
\EOE
\end{fed}

Given $ X\in\K^{n\times r}$  notice that $( A, X)$ is $h$-compatible if and only if 
$\dim ( X^*\cS)=h$, for some $h$-dimensional right dominant subspace $\cS$ of $A$.

\medskip

Throughout the rest of the work, we fix $1\leq h\leq \text{rank}( A)\leq p=\min\{m,n\}$ and we let  $0\leq j=j(h)<h\leq k=k(h)\leq \text{rank}(A)$ be defined as in Eqs. \eqref{eq defi j} and \eqref{eq defi k}. The next result will allow us to show
that block Krylov methods produce arbitrarily good approximations of right and left dominant subspaces under the previous hypothesis (see Remark \ref{rem strat} below). 
In what follows, given a matrix $ Z$ we let $ Z^\dagger$ denote its Moore-Penrose pseudo-inverse.

\begin{teo}\label{first main result} 
Let $t\geq 0$ and let  $\phi(x)$ be a polynomial 
of degree at most $2t+1$ with odd powers only,
such that 
$\phi(\sigma_1),\,\ldots,\,\phi(\sigma_k)>0$.
 Let $( A, X)$ be $h$-compatible and let $\cK_t=\cK_t( A, X)$. 
 Then, there exists an $h$-dimensional left dominant subspace $\cS'$ for $ A$ such that
\begin{eqnarray*}
\|\sin \Theta(\cK_t,\cS')\|_{2,F}&\leq & 2\, \|\sin \Theta(R( V_k^* X), V_k^*\cV_j)\|_{2,F} + \\ \\ & &
\|\phi( \Sigma_{k,\perp})\|_2\|\phi( \Sigma_{k})^{-1}\|_2 \|  V_{k,\perp}^*  X
( V_k^*  X) ^\dagger\|_{2,F}\,.
\end{eqnarray*}
In case $j=0$ (respectively $k=\text{rank}( A)$) the first term (respectively the second term) should be omitted in the previous upper bound.
Moreover, we have the inequality $$  \Theta(R( V_k^* X), V_k^*\cV_j)\leq  \Theta(R( X),\cV_j)\,.$$  
\end{teo}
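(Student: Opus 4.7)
The plan is to construct an explicit $h$-dimensional subspace inside $\cK_q$ as the range of a matrix $Y=\phi(A)XM$ for a suitable choice of $M\in\K^{r\times h}$, and to compare it to a cleverly chosen $h$-dimensional left dominant subspace $\cS'$ of $A$. Throughout I would exploit the refined block decomposition coming from the three singular-value groups $\{\sigma_1,\ldots,\sigma_j\}$, the repeated values $\sigma_{j+1}=\cdots=\sigma_k=\sigma_h$, and $\{\sigma_{k+1},\ldots\}$, under which $\phi(\Sigma_k)=\text{diag}(\phi(\Sigma_j),\phi(\sigma_h)I_{k-j})$ has block-diagonal form.

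First, from $h$-compatibility combined with $\cV_j\subset\cS$, I would deduce that $\text{rank}(V_j^* X)=j$, that $\dim R(V_k^* X)\ge h$, and by a Grassmann dimension count that $\dim\bigl(R(V_k^* X)\cap\text{span}(e_{j+1},\ldots,e_k)\bigr)\ge h-j$. I would pick an isometry $N_2\in\K^{k\times(h-j)}$ whose columns span an $(h-j)$-dimensional subspace of this intersection (with rows $1,\ldots,j$ identically zero), and take as target the $h$-dimensional left dominant subspace
\[
\cS'\ :=\ \cU_j\oplus U_k R(N_2).
\]
Put $N:=[V_k^* V_j\mid N_2]\in\K^{k\times h}$ (an isometry, since its two column blocks sit in orthogonal coordinate subspaces of $\K^k$), $M:=(V_k^* X)^\dagger N$, and $Y:=\phi(A)XM$, so $R(Y)\subseteq R(\phi(A)X)\subseteq\cK_q$.

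I would then split $Y=Y_a+Y_b$ with $Y_a:=U_k\phi(\Sigma_k)P_{R(V_k^* X)}N$ and $Y_b:=U_{k,\perp}\phi(\Sigma_{k,\perp})V_{k,\perp}^* X(V_k^* X)^\dagger N$. By the choice of $N_2$, the last $h-j$ columns of $Y_a$ lie exactly in $U_k R(N_2)\subset\cS'$, while the first $j$ columns differ from $U_j\phi(\Sigma_j)$ (whose range is $\cU_j\subset\cS'$) by $U_k\phi(\Sigma_k)(I-P_{R(V_k^* X)})V_k^* V_j$. Since the $\cU_j$-component of this last expression lies inside $\cS'$, only its $(\cU_k\ominus\cU_j)$-component contributes to $(I-P_{\cS'})Y_a$, and the block structure of $\phi(\Sigma_k)$ rescales that component by exactly $\phi(\sigma_h)$, yielding
\[
\|(I-P_{\cS'})Y_a\|_{2,F}\ \le\ \phi(\sigma_h)\,\|\sin\Theta(R(V_k^* X),V_k^*\cV_j)\|_{2,F}.
\]
Combined with $\|(I-P_{\cS'})Y_b\|_{2,F}\le\|Y_b\|_{2,F}\le\|\phi(\Sigma_{k,\perp})\|_2\|V_{k,\perp}^* X(V_k^* X)^\dagger\|_{2,F}$ (using $Y_b\in\cU_k^\perp\subset(\cS')^\perp$ and $\|N\|_2=1$), the identity $\|\sin\Theta(R(Y),\cS')\|_{2,F}\le\|(I-P_{\cS'})Y\|_{2,F}/\sigma_{\min}(Y)$, and a lower bound $\sigma_{\min}(Y)\gtrsim\phi(\sigma_h)$ coming from the block structure of $Y_a$, the containment $R(Y)\subseteq\cK_q$ produces the main two-term bound; the factor $4$ absorbs the slack in the lower bound for $\sigma_{\min}(Y)$.

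Finally, for the monotonicity inequality $\Theta(R(V_k^* X),V_k^*\cV_j)\le\Theta(R(X),\cV_j)$, note that $V_k^*$ restricts to an isometry on $\cV_k$ and $\cV_j\subset\cV_k$; hence the left-hand side equals $\Theta(P_{\cV_k}R(X),\cV_j)$, and the claim follows from the standard monotonicity of principal angles under orthogonal projection of one argument onto a superspace of the other, which I would invoke as an auxiliary lemma from the paper's appendix. The main obstacle is extracting the clean constant $4$ rather than a condition-number factor $\phi(\sigma_1)/\phi(\sigma_h)$: the key saving is that the $\cU_j$-direction portion of the $Y_a$-error lies inside $\cS'$ and therefore contributes nothing to $\sin\Theta(R(Y_a),\cS')$, which isolates the scale of the error at $\phi(\sigma_h)$ and matches it with $\sigma_{\min}(Y)$ up to a universal constant.
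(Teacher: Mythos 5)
Your construction is genuinely different from the paper's: you build an explicit $h$-dimensional candidate $R(Y)\subseteq\cK_q$ with $Y=\Phi M$ and a fixed target $\cS'=\cU_j\oplus U_kR(N_2)$ chosen from the intersection $R(V_k^*X)\cap\mathrm{span}(e_{j+1},\ldots,e_k)$, whereas the paper never isolates an $h$-dimensional subspace of $\cK_q$ at all: it bounds $\|(I-\Phi\Phi_k^\dagger)P_{\cS'}\|_{2,F}$, where $\Phi_k\Phi_k^\dagger=U_kP_\cT U_k^*$ is already an orthogonal projection, chooses $\cS'$ adaptively via principal vectors (of $\cW$ against $\cH'=V_k^*\cV_j$, and of $\cT''=\cT\ominus\cT'$ against $(\cH')^\perp$), and obtains the constant $4$ from a subspace-gluing estimate (Proposition \ref{prop pegoteo de subespacios}). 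Your key observation --- that the $\cU_j$-component of the error in $Y_a$ lies inside $\cS'$ and the residual component is rescaled by exactly $\phi(\sigma_h)$, so the numerator $\|(I-P_{\cS'})Y_a\|_{2,F}\le\phi(\sigma_h)\|\sin\Theta(R(V_k^*X),V_k^*\cV_j)\|_{2,F}$ --- is correct and is a nice alternative to the gluing argument.

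The gap is in the denominator. Writing $s=\|\sin\Theta(\cW,\cH')\|_2$ with $\cW=R(V_k^*X)$ and $\cH'=V_k^*\cV_j$, one computes $N^*P_\cW N=\mathrm{diag}(I-Z^*Z,\,I)$ with $Z=(I-P_\cW)V_k^*V_j$, so the sharp lower bound is
\begin{equation*}
\sigma_{\min}(Y)\;\ge\;\sigma_{\min}(Y_a)\;\ge\;\phi(\sigma_k)\,\sigma_{\min}(P_\cW N)\;=\;\phi(\sigma_h)\sqrt{1-s^2}\,,
\end{equation*}
and this is essentially attained; it is \emph{not} $\gtrsim\phi(\sigma_h)$ with a universal constant, since $s$ may be arbitrarily close to $1$ under mere $h$-compatibility. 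Dividing $\|(I-P_{\cS'})Y\|_{2,F}$ by $\sigma_{\min}(Y)$ therefore yields a bound of the form $(1-s^2)^{-1/2}\bigl(\|\sin\Theta(\cW,\cH')\|_{2,F}+\|\phi(\Sigma_{k,\perp})\|_2\|\phi(\Sigma_k)^{-1}\|_2\|V_{k,\perp}^*X(V_k^*X)^\dagger\|_{2,F}\bigr)$. The factor $4$ can absorb $(1-s^2)^{-1/2}$ on the first term only when $s\le\sqrt{15}/4$, and it can never be absorbed into the second term, which the theorem asserts with constant exactly $1$; as $\Theta(R(X),\cV_j)\to\frac{\pi}{2}I$ your bound degenerates while the stated one does not. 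To close this you would need the paper's mechanism for the second term (the identity $\Phi_k^\dagger=(V_k^*X)^\dagger(\phi(\Sigma_k)P_\cW)^\dagger U_k^*$ together with $\|(\phi(\Sigma_k)P_\cW)^\dagger\|_2\le\|\phi(\Sigma_k)^{-1}\|_2$, which avoids any division by $\sigma_{\min}$), or else settle for a weaker, angle-dependent version of the statement.
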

\begin{proof}
See Section \ref{sec prueba teo 2.1}.
\end{proof}

We point out that Theorem \ref{first main result} above is related to
\cite[Theorem 2.1]{D19} from the DIKM-I theory. In case $\sigma_h=\sigma_{h+1}$ (and hence $k>h$), the hypothesis in Theorem \ref{first main result}  involves
the (continuum) class of $h$-dimensional left dominant subspaces of $A$; that is, we are allowed to consider any such dominant subspace to test our assumptions. In particular, the hypothesis is satisfied in a generic case.

As mentioned in Section \ref{sec dom subs3}, since $\sigma_j>\sigma_{j+1}$ and $\sigma_k>\sigma_{k+1}$ the
subspaces $\cV_j=R(V_j)$ and $\cV_k=R(V_k)$ do not depend on the particular SVD of $A=U\Sigma V^*$ being considered. 
Hence, the principal angles $\Theta(R( V_k^* X), V_k^*\cV_j)$ in Theorem \ref{first main result}
do not depend on the particular SVD of $A$, since they coincide with 
$\Theta(R( P_{\cV_k} X), P_{\cV_k}\cV_j)$ (here $P_{\cV_k}\in\mathbb K^{n\times n}$ denotes the orthogonal projection onto $\cV_k\subset \mathbb K^n$).  

To apply Theorem \ref{first main result} we need to bound
the principal angles $\Theta(R( V_k^* X),R(V_k^*\cV_j))$ (notice that \cite[Theorem 2.1]{D19} does not require such bound). 
We remark that the second inequality in  Theorem \ref{first main result} 
provides an alternative method to have control of the previous principal angles.

Using the results from \cite{ZKnya} we can interpret the second term in the upper bound in Theorem \ref{first main result} as a measure of how close the subspaces $R(X)$ and (the $k$-dimensional right dominant subspace) $\cV_k$ are; indeed, the smaller this second term is, the closer the subspaces are. The distance between these subspaces does not seem to have been considered before 
in the analysis of convergence of dominant subspaces obtained by block Krylov methods, under our present assumption that $h\leq r<k$ (where the dimension of the dominant subspace is strictly bigger than that of $R(X)$).

\begin{rem}[Convergence analysis to dominant subspaces]\label{rem strat} Theorem \ref{first main result} suggests a strategy to obtain 
a convergence analysis of subspaces obtained from the block Krylov method to $h$-dimensional left dominant subspaces. We now describe this strategy in detail for the benefit of the reader. With the notation of Theorem \ref{first main result} we consider the following steps:
\begin{enumerate}
\item[Step 1.] Beginning with $A$, $X$ and $q\geq 0$, we first consider the (auxiliary) block Krylov 
matrix 
$$
\tilde K_q:= K_q(A^*,AX)=(A^*(AX)\quad (A^*A) A^*(AX) \quad \cdots \quad (A^*A)^q A^*(AX))
$$
constructed in terms of $A^*\in\mathbb K^{n\times m}$ and (the auxiliary starting guess matrix) $AX\in \mathbb K^{m\times r}$. Assume that $1\leq j <h$ so that $\sigma_j(A^*)=\sigma_j(A)>\sigma_{j+1}(A)=\sigma_{j+1}(A^*)$.

Notice that if $\psi(x)\in \K[x]$ is any polynomial of degree at most $2q$ with even powers only and such that $\psi(\sigma_1)\geq \ldots\geq \psi( \sigma_j)>0$ then $\Psi_q=V(\psi(\Sigma) \cdot\Sigma^2)V^*X$ is such that $R(\Psi_q)\subset \tilde K_q$; here we have used the convention $\Sigma^{2\ell}=(\Sigma^\ell)^*\cdot \Sigma^\ell\in \K^{n\times n}$ for the sake of simplicity. Moreover,  it is well known that there are convenient choices of $\psi(x)$ which warrant that 
the angles between $R(\Psi_q)$ and (the $j$-dimensional left dominant subspace of $A^*$) $\cV_j$ become arbitrarily small as $q$ increases. 
\item[Step 2.] 
Assume that $k<\text{rank}(A)$; an application of Theorem \ref{first main result} to the matrix $A$, with starting guess matrix $\Psi_q$ and $t=0$ (so that $\phi(x)=x$), shows that there exists 
an $h$-dimensional left dominant subspace $\cS'$ of $A$ such that 
\begin{eqnarray}\label{ecuac pepi1}
\|\sin\Theta (\cK_{q+1}(A,X), \cS' )\|_{2,F}\leq 2\,\|\sin\Theta(R(\Psi_q)), \cV_j)\|_{2,F}+\frac{\sigma_{k+1}}{\sigma_k} \| V_{k,\perp}^* \Psi_q (V_k^* \Psi_q)^\dagger\|_{2,F}\,.
\end{eqnarray}
where we have used the inclusion $\cK_0(A^*, \Psi_q)\subset \cK_{q+1}(A,X)$, the properties of principal angles described in Remark \ref{rem sobre ang princ}, and that $A^*=V\Sigma U^*$ is a SVD of $A^*$.
\item[Step 3.] As mentioned in Step 1., the first term in the upper bound in Eq. \eqref{ecuac pepi1} is known to become arbitrarily small for convenient choices of $\psi(x)$; we will show that there are choices of $\psi(x)$ for which both the expressions $\|\sin\Theta(R(\Psi_q)), \cV_j)\|_{2,F}$
and $\| V_{k,\perp}^* \Psi_q (V_k^* \Psi_q)^\dagger\|_{2,F}$ become arbitrarily small as $q$ increases. 
\end{enumerate}
We point out that the cases in which $j=0$ or $k=\text{rank}(A)$ can be tackled in a similar way.
The reader will notice that Theorem \ref{first main result} allows for other possible approaches to the  convergence analysis of subspaces obtained from the block Krylov algorithm to $h$-dimensional left dominant subspaces. 
\end{rem}

In what follows we consider the different steps of the strategy for the convergence analysis
of the block Krylov subspaces to the $h$-dimensional left dominant subspaces described in Remark 
\ref{rem strat}. We begin by obtaining an upper bound related to the claim in Step 1. 
Thus, given $A$ with singular values $\sigma_1\geq\ldots\geq \sigma_p\geq 0$ (with $p=\min\{m,n\}$), we introduce
$$
\gamma_i=\frac{\sigma_i-\sigma_{i+1}}{\sigma_{i+1}}\geq 0 \peso{for} 1\leq i\leq \text{rank}(A)-1\,.
$$
The following result is a straightforward consequence of the results from \cite{D19}.
\begin{teo}\label{pro 3.5}
 Let $( A, X)$ be $h$-compatible and assume that $1\leq j<h$, so then $\gamma_j>0$.
Let $\psi(x)\in \K[x]$ be a polynomial of degree at most $2q$ with even powers only and such that $\psi(\sigma_1), \ldots,\psi(\sigma_j)>0$. If we let $\Psi_q= V(\psi(\Sigma)\cdot \Sigma^2)V^*X$ then
$$
\|\sin\Theta(R(\Psi_q),\cV_j)\|_{2,F}\leq \|\psi(\Sigma_{j,\perp})\|_2\,
\|\psi(\Sigma_{j})^{-1}\|_2\, 
 \| V_{j,\perp}^* X( V_j^*  X)^\dagger\|_{2,F}\,
\frac{1}{(1+\gamma_j)^2}
 $$
\end{teo}
\begin{proof}
See Section \ref{sec 4.3}.
\end{proof}

\medskip

In the next result we obtain an upper bound required in Step 3 of Remark \ref{rem strat}. 

\begin{teo}\label{third main result S1} 
 Let $( A, X)$ be $h$-compatible, let $q\geq 0$ and assume that $\sigma_{k+1}>0$. Let $T_{2q}(x)$ be the Chebyshev polynomial of the first kind of 
degree $2q$ and set $\psi(x)=T_{2q}(x/\sigma_{k+1})$. Then, $\psi(\sigma_1)\geq \ldots\geq \psi(\sigma_k)>1$ and if we let
$\Psi_q=V(\psi(\Sigma)\cdot \Sigma^2)V^*X$ then
\begin{equation}\label{suert1}
\|\sin\Theta(R(\Psi_q),\cV_j)\|_{2,F}\leq \frac{\psi(\sigma_{k})}{\psi(\sigma_{j})}\, 
 \| V_{j,\perp}^* X( V_j^*  X)^\dagger\|_{2,F}\,
\frac{1}{(1+\gamma_j)^2}
\end{equation}

\begin{equation}\label{suert2}
\| V_{k,\perp}^* \Psi_q (V_k^* \Psi_q)^\dagger\|_{2,F}\leq 
\frac{1}{\psi(\sigma_{k})}\, 
 \| V_{k,\perp}^* X( V_k^*  X)^\dagger\|_{2,F}\, \frac{1}{(1+\gamma_k)^2}
\end{equation}
Moreover, in this case we have 
\begin{equation}\label{suert3}
\frac{\psi(\sigma_{k})}{\psi(\sigma_{j})}\leq 
\frac{1}{(1+\gamma_j)^{2q}} \py
\frac{1}{\psi(\sigma_{k})}\leq 4\, \frac{2^{-2q\,\min\{\sqrt{\gamma_k}\, , \, 1 \}}}{(1+\gamma_k)}\,.
\end{equation}
\end{teo}
\begin{proof} See Section \ref{sec 4.3}.
\end{proof}

\begin{rem}\label{rem still true} Consider the notation in Theorem \ref{third main result S1} above. We point out that the first inequality in Eq. \eqref{suert3} corresponds to a worst
case scenario; moreover, numerical examples show that the upper bound can be (typically) tightened. Indeed, 
let $T_{2q}(x)$ be the Chebyshev polynomial of the first kind of 
degree $2q$ and set $\psi(x)=T_{2q}(x/\sigma_{k+1})$. For $x\geq 1$ we get the representation
$$T_{2q}(x)=\frac{(x+\sqrt{x^2-1})^{2q} + (x-\sqrt{x^2-1})^{2q} }{2}\,.$$
For $1\leq \ell \leq k+1$, set $\eta_\ell=\frac{\sigma_{\ell}}{\sigma_{\ell+1}}\geq 1$ so $\eta_\ell=1+\gamma_\ell$. Then 
$\frac{\sigma_j}{\sigma_{k+1}}=\eta_j\cdot\eta_k>1$ and
$$
\frac{\psi(\sigma_{k})}{\psi(\sigma_{j})}=\frac{T_{2q}(\eta_k)}{T_{2q}(\eta_j\cdot\eta_k)}=
\frac{(\eta_k+\sqrt{\eta_k^2-1})^{2q} + 
(\eta_k-\sqrt{\eta_k^2-1})^{2q}}{\eta_j^{2q}(\eta_k+\sqrt{\eta_k^2-\frac{1}{\eta_j^2}})^{2q} + (\eta_k-\sqrt{\eta_k^2-
\frac{1}{\eta_j^2}})^{2q}}\leq \frac{1}{\eta_j^{2q}}=\frac{1}{(1+\gamma_j)^{2q}}\,.
$$
The upper bound above is sharp since 
$$
\lim_{\eta_k\rightarrow \infty }
\frac{T_{2q}(\eta_k)}{T_{2q}(\eta_j\cdot\eta_k)}
=\frac{1}{(1+\gamma_j)^{2q}}\,.
$$
Nevertheless, in the regime $\eta_j,\,\eta_k\in (1,1+\varepsilon)$ for small $\varepsilon>0$ 
(which is quiet relevant for applications)
we can expect 
better upper bounds. Indeed, numerical examples show that
if $1<\eta_k\leq \eta_j$ then we have that for $q\geq 1$,
\begin{equation}\label{eq conjec1}
\frac{\psi(\sigma_{k})}{\psi(\sigma_{j})}=\frac{T_{2q}(\eta_k)}{T_{2q}(\eta_j\cdot\eta_k)}\leq 
4\,\frac{2^{-2q\,\min\{\sqrt{\gamma_j}\, , \, 1 \}}}{(1+\gamma_j)}\,,
\end{equation}which is a tighter upper bound than that in Eq. \eqref{suert3} for 
$\eta_j\in (1,1+\varepsilon)$ for small $\varepsilon>0$ . We conjecture that Eq. \eqref{eq conjec1} is true (under the previous restrictions). In fact, notice that when $\eta_k\approx 1$ then $\frac{T_{2q}(\eta_k)}{T_{2q}(\eta_j\cdot\eta_k)}\approx \frac{1}{T_{2q}(\eta_j)}$, so Eq. \eqref{eq conjec1} is true (see \cite{D19}). On the other hand,
for $\gamma_j\geq 1$ then Eq. \eqref{suert3} shows that 
$$
\frac{\psi(\sigma_{k})}{\psi(\sigma_{j})}=\frac{T_{2q}(\eta_k)}{T_{2q}(\eta_j\cdot\eta_k)}\leq \frac{1}{(1+\gamma_j)^{2q}}\leq 2^{-2q}
\,.
$$
\end{rem}

In the following result we apply the strategy described in Remark \ref{rem strat} and obtain upper bounds for the convergence of block Krylov subspaces to $h$-dimensional left dominant subspaces
for the spectral norm (the Frobenius norm case can be handled similarly); to simplify the statement below,  we consider the following constants:
given a $h$-compatible pair $(A,X)$ set $C(A,X,j,k)_{2,F}=C_{2,F}$ determined as follows. If $h\leq k<\text{rank}(A)$ and $1\leq j$ then
\beq \label{defi constC}
C_{2,F}(V,X)=C_{2,F}=\max\left \{ 2\,\| V_{j,\perp}^* X( V_j^*  X)^\dagger\|_{2,F}\coma  \| V_{k,\perp}^* X( V_k^*  X)^\dagger\|_{2,F}\right\}\,.
\eeq If $j=0$ we let $C_{2,F}= \| V_{k,\perp}^* X( V_k^*  X)^\dagger\|_{2,F}$; if $k=\text{rank}(A)$ then we set $C_{2,F}=2\,\| V_{j,\perp}^* X( V_j^*  X)^\dagger\|_{2,F}$.
%We will follow the convention: $\gamma_0=+\infty$ in case $j=0$. 

\begin{teo}\label{coro para comp abc1}
Let $(A,X)$ be $h$-compatible, assume that $0\leq j < h \leq k<\text{rank}(A)$, and let $\cK_{q+1}=\cK_{q+1}(A,X)\subset \K^m$, for $q\geq 0$. 
Let $T_{2q}(x)$ be the Chebyshev polynomial of the first kind of 
degree $2q$ and set $\psi(x)=T_{2q}(x/\sigma_{k+1})$. If we let
$\Psi_q=V(\psi(\Sigma)\cdot \Sigma^2)V^*X$ then there exists an $h$-dimensional left dominant subspace $\cS'$ of $A$  such that:
\begin{equation}\label{ecuac pepi2}
\|\sin\Theta (R(A\, \Psi_q),\cS' )\|_{2,F}\leq
C_{2,F} \, \left( \frac{\psi(\sigma_k)}{\psi(\sigma_j)}\,\frac{1}{(1+\gamma_j)^2}
 +  \frac{1}{\psi(\sigma_{k})}\, 
 \frac{1}{(1+\gamma_k)^3}\right)\,,
\end{equation} where $C_{2,F}=C_{2,F}(V,X)$.
In particular, since $R(A\, \Psi_q)\subset \cK_{q+1}$ and $\dim R(A\, \Psi_q)\geq h$,
\beq\label{eq controlsin1}
\|\sin \Theta(\cK_{q+1}, \cS')\|_{2,F}\leq C_{2,F} \,\left(\frac{1}{(1+\gamma_j)^{2(q+1)}} + 
4\,\frac{ 2^{-2q\,\min\{\sqrt{\gamma_k}\, , \, 1 \}}}{(1+\gamma_k)^4} \right)
\,.\eeq
In case $j=0$ then the first terms (in the expressions between parentheses) in Eqs. \eqref{ecuac pepi2} and \eqref{eq controlsin1} should be omitted.
\end{teo}

\begin{proof}
See Section \ref{sec 4.3}.
\end{proof}

As already mentioned, the upper bound in Eq. \eqref{ecuac pepi2} can be obtained by following the three-step strategy in Remark \ref{first main result}. 
The reader can check that in case $k=\text{rank}(A)$ (so that $\sigma_{k+1}=0$) then we can set $\psi(x)=T_{2q}(x/\sigma_k)=T_{2q}(x/\sigma_{j+1})$ and get the bound in Eq. \eqref{ecuac pepi2} omitting the second terms 
in the expressions between parentheses; in this case Eq. \eqref{eq controlsin1}
becomes 
$$\|\sin \Theta(\cK_{q+1}, \cS')\|_{2,F}\leq  \frac{4\, C_{2,F} }{(1+\gamma_j)^3}\, 2^{-2q\,\min\{\sqrt{\gamma_j}\, , \, 1 \}}\,.$$
We point out that Theorem \ref{coro para comp abc1} complements the convergence analysis described in 
\cite[Corollary 2.5]{D19}. That is, this result can be applied in contexts in which 
\cite[Corollary 2.5]{D19} can not be applied; moreover, Theorem \ref{coro para comp abc1} shows the existence of 
$h$-dimensional subspaces $\cT\subseteq \cK_{q+1}=\cK_{q+1}(A,X)$ that are
arbitrarily good approximations of {\it some} $h$-dimensional left dominant subspace $\cS'$ of $A$ (for sufficiently large $q\geq 0$). As opposed to Theorem \ref{first main result}, Theorem \ref{coro para comp abc1} does not require a priori knowledge of $\Theta(R( V_k^* X),V_k^*\cV_j)$ 
(see the comments after Theorem \ref{first main result}). 
On the other hand, the speed at which the upper bound in Eq. \eqref{eq controlsin1} decreases depends on both gaps $\gamma_j$ and $\gamma_k$; in particular, this result warrants a better convergence speed when both singular gaps are significant.

\subsection{Low-rank approximations from block Krylov methods}\label{sec low rank approx}

Notice that 
the upper bounds in Theorem \ref{coro para comp abc1} can be made arbitrarily small for large enough $q\geq 0$. Therefore the corresponding block Krylov subspace contains (arbitrarily good) approximate left dominant subspaces. Still, the previous results do not provide a practical method to compute such approximate dominant subspaces and the corresponding low-rank approximations. In this section we revisit Algorithm \ref{algoalgo} without assuming a singular gap, as a practical way to construct low-rank approximations.  Our approach to deal with this problem is based on the approximate left dominant subspaces of a matrix $ A$; indeed, we follow arguments from \cite{WZZ15}.

For the next results, we consider Algorithm \ref{algoalgo} with input: $ A\in\K^{m\times n}$, starting guess $ X\in \K^{n\times r}$ and power $\ell=q+1$ for some $q\geq 0$;  we set our target rank to $1\leq h\leq $ rank $( A)$.
Once the algorithm stops, we consider the output matrices $U_K$ and $\hat U_h$; we further describe $\hat U_h$ in terms of its columns,
$\hat{ U}_h=(\hat{ u}_1,\ldots,\hat{ u}_h)\in\K^{m\times h}$.  In this case we set
\begin{equation} \label{defi hatUi}
\hat{ U}_i=(\hat{ u}_1,\ldots,\hat{ u}_i)\in\K^{m\times i}\ , \peso{for} 1\leq i\leq h\,.
\end{equation}
As before, we let $j=j(h)<h\leq k=k(h)$ be defined as in Eqs. \eqref{eq defi j} and \eqref{eq defi k},
 and we consider the notation used so far. 
 Further, given $1\leq i\leq h$ we let $ A_i=U_i\,\Sigma_i\,V_i^*\in\K^{m\times n}$ denote a best rank-$i$ approximation of $ A$ (so that $\| A- A_i\|_2=\sigma_{i+1}$).

Let $(A,X)$ be $h$-compatible and let $q\geq 0$. 
In what follows we make use of the constants $\delta(A,X,q,j,k)_{2,F}=\delta_{2,F}$ given by: 
for $k<\text{rank}(A)$ then let $T_{2q}(x)$ be the Chebyshev polynomial of the first kind of degree $2q$, $\psi(x)=T_{2q}(x/\sigma_{k+1})$ and set
\beq\label{eq defi deltai} 
\delta_{2,F}:=\sqrt 2\, 
C_{2,F} \, \left( \frac{\psi(\sigma_k)}{\psi(\sigma_j)}\,\frac{1}{(1+\gamma_j)}
 +  \frac{1}{\psi(\sigma_{k})}\, 
 \frac{1}{(1+\gamma_k)^2}\right)\,,
\end{equation} where $C_{2,F}$ is defined as in Eq. \eqref{defi constC}.
In case $j=0$ the first term should be omitted.
In case $k=\text{rank}(A)$ then let $\psi(x)=T_{2q}(x/\sigma_{k})=T_{2q}(x/\sigma_{j+1})$ and set 
\beq\label{eq defi deltaiF}
\delta_{2,F}:= 
 \,  \frac{\sqrt 2}{\psi(\sigma_j)}\,\frac{C_{2,F}}{(1+\gamma_j)}
\,.\end{equation}
%In the definition of $\delta_{2,F}$ above we have used the constant $C_{2,F}$ defined in Eq. \eqref{defi constC}.

\begin{teo}\label{teo nuevo bb1}
Let $(A,X)$ be $h$-compatible and let $q\geq 0$ be such that $\delta_2\leq 1$. Let $U_K$ be the output of Algorithm \ref{algoalgo} with the power parameter set to $q+1\geq 1$. 
Then there exists an $h$-dimensional right dominant subspace $\tilde \cS$ of $A$ such that 
\beq \label{eq teonuevo1}
\| AP_{\tilde \cS}-U_KU_K^* AP_{\tilde\cS}\|_{2,F}\leq \sigma_{h+1}\cdot\delta_{2,F}\,.
\eeq
\end{teo}
\begin{proof} See Section \ref{sec4.4}.
\end{proof}

Although rather technical, the previous result allows us to get the following estimates for the first $h$ singular values of $A$.

\begin{teo}\label{teo nuevo bb2}
Let $(A,X)$ be $h$-compatible and let $q\geq 0$ be such that $\delta_2\leq 1$. Let $\hat A_h=\hat U_h \hat U_h^* A$, where $\hat U_h$ is the output of Algorithm \ref{algoalgo} with the power parameter set to $q+1\geq 1$. Then, 
\beq\label{eq est for singval}
 \sigma_{i}-\sigma_{h+1}\cdot\delta_2\leq \sigma_i(\hat A_h) \leq \sigma_i\peso{for}  1\leq i\leq h\,.
\eeq
\end{teo}
\begin{proof} See Section \ref{sec4.4}.
\end{proof}

\begin{teo}\label{other main result}
Let $(A,X)$ be $h$-compatible and let $q\geq 0$ be such that $\delta_2\leq 1$.
Let $\hat U_{h}$ be the output of Algorithm \ref{algoalgo} with the power parameter set to $q+1\geq 1$. Then, for every $1\leq i\leq h$ we have that 
\begin{equation}\label{eq conc teo aprox}
\| A-\hat{ U}_i\hat{ U}_i^* A\|_{2,F}\leq \| A- A_i\|_{2,F} + \sigma_{i+1} \cdot \delta_F\,,
\end{equation} 
where $\delta_{2,F}$ are defined in Eq. \eqref{eq defi deltai} or \eqref{eq defi deltaiF}.
\end{teo}
\begin{proof} See Section \ref{sec4.4}.
\end{proof}

Using the previous results together with estimates for $\delta_{2,F}$ as defined in Eqs. \eqref{eq defi deltai} and \eqref{eq defi deltaiF} we can obtain the following upper bounds for the approximation of singular values and approximation errors using the block Krylov Algorithm \ref{algoalgo}.

\begin{teo}\label{teo supermain comple1}
Let $(A,X)$ be $h$-compatible, let $0\leq j<h\leq k<\text{rank}(A)$, and let $C_{2,F}$ be  defined as in Eq. \eqref{defi constC}. Assume that 
\begin{equation}\label{eq cond conc1}
\sqrt{2}\,C_{2}\,\left( \frac{1}{(1+\gamma_j)^{2q+1}}+ 4\,\frac{2^{-2q\,\min\{\sqrt{\gamma_k}\, , \, 1 \}}}{(1+\gamma_k)^3}\right)\leq 1\,,
\end{equation}
where we follow the convention: $\gamma_0=+\infty$ in case $j=0$. 
Let $\hat U_h$ be the output of Algorithm \eqref{algoalgo} with the power parameter set to $q+1\geq 1$;
Let 
$$\hat A_i=\hat U_i\hat U_i^*A\peso{for} 1\leq i\leq h\,,$$ 
be the rank-$i$ approximation of $A$ obtained from Algorithm \ref{algoalgo}. Then, for $1\leq i\leq h$:
$$\sigma_i-\sigma_{h+1}\cdot \sqrt{2}\,C_{2}\,\left( \frac{1}{(1+\gamma_j)^{2q+1}}+ 4\,\frac{2^{-2q\,\min\{\sqrt{\gamma_k}\, , \, 1 \}}}{(1+\gamma_k)^3}\right)
\leq \sigma_i(\hat A_h)\leq \sigma_i$$ 
and 
$$\| A-\hat{ U}_i\hat{ U}_i^* A\|_{2,F}\leq \| A- A_i\|_{2,F} + \sigma_{i+1} \cdot \sqrt{2}\,C_{F}\,\left( \frac{1}{(1+\gamma_j)^{2q+1}}+ 4\,\frac{2^{-2q\,\min\{\sqrt{\gamma_k}\, , \, 1 \}}}{(1+\gamma_k)^3}\right)
\,. $$
\end{teo}

\begin{proof}
See Section \ref{sec4.4}.
\end{proof}

Theorem \ref{teo supermain comple1} complements the analysis in the deterministic setting obtained in 
\cite[Corollary 2.6]{D19}.
That is, this result can be applied in contexts in which 
\cite[Corollary 2.6]{D19} can not be applied (i.e., when there is no singular gap). Theorem \ref{teo supermain comple1} shows that the distance $\|A-\hat A_h(q)\|_{2,F}$, between the low-rank approximations $\hat A_h(q)$ of $A$ obtained from the block Krylov method and $A$, converges to the minimial distance $\|A-A_h\|_{2,F}$ as the power parameter (number of iterations) $q\geq 0$ increases. Moreover, it also shows the convergence of the first $h$ singular values of  $\hat A_h$ to the corresponding singular values of $A$.
Thus, our approach follows the paradigm introduced in \cite{MM15} for the assessment of the quality of low-rank approximations. On the other hand, the speed at which the upper bounds  in Theorem \ref{teo supermain comple1} decrease depends on both gaps $\gamma_j$ and $\gamma_k$; that is, this result warrants better convergence speeds when both singular gaps are significant. 

\begin{rem}
There is a vast literature related to low-rank approximation, both from a deterministic and randomized point of view, taking into account singular gaps, or disregarding these gaps (see the survey \cite{KiSc17}). 

Randomized methods \cite{DKM06b,DM16,Gu15,HMT11,MM15} typically draw a random $n\times r$ matrix $ X$ (a starting guess matrix) and consider the random subspace $R( X)\subset \K^n$ given by the range of $ X$. One of the advantages of this approach is that it is possible to prove that, with high probability, $X$ satisfies some compatibility properties with the structure of $A$, regardless of the particular choice of $A$ (see \cite{DI19}); on the other hand, the conclusions of the randomized approach then typically hold  with high probability. Thus, these conclusions do not apply to particular choices of matrices $X$.  In this setting, the work \cite{MM15} contains a deep analysis of the block Krylov method for low-rank approximations, when the starting guess matrix is drawn from a standard Gaussian random matrix. The gap-independent bounds obtained in \cite[Theorems 10 and 12]{MM15} (see also \cite[Theorems 11 and 12]{MM15}) imply that a number
$O(1/\sqrt{\varepsilon})$ of iterations in Algorithm \ref{algoalgo} are required to warrant (with high probability) that $\|A-\hat A_h(q)\|_2\leq (1+\varepsilon) \|A-A_h\|_2$ and $|\sigma_i(A)^2 - \sigma_i(\hat A_h)^2|\leq \varepsilon\,\sigma_h(A)^2 $, for $1\leq i\leq h$.

Notice that we follow a deterministic approach where we consider a fixed (deterministic) $n\times r$ matrix $X$ satisfying a set of specific (explicit) compatibility hypotheses with the structure of $A$ (that hold in generic cases) and obtain conclusions that hold for the particular matrices $X$ and $A$.  In our deterministic setting, 
Theorem \ref{teo supermain comple1}  implies (for a fixed $h$-compatible pair $(A,X)$ and $1\leq j<h\leq k<\text{rank}(A)$) that 
$$
\| A-\hat{ U}_i\hat{ U}_i^* A\|_{2,F}\leq \| A- A_i\|_{2,F} + \sigma_{i+1} \cdot 2\,\sqrt{2}\,C_{F}\,\max \left\{ \frac{1}{(1+\gamma_j)^{2q+1}}\coma 4\,\frac{2^{-2q\,\min\{\sqrt{\gamma_k}\, , \, 1 \}}}{(1+\gamma_k)^3}\right\}\,,
$$where $C_{F}=\max\{ 2\,\| V_{j,\perp}^* X( V_j^*  X)^\dagger\|_{F}\coma  \| V_{k,\perp}^* X( V_k^*  X)^\dagger\|_{F}\}$. Hence, if $q$ is the smallest integer satisfying
$$
\log(\frac{2\,\sqrt{2}\,C_F}{\varepsilon})\leq \min\left \{ (2q+1)\,\log(1+\gamma_j)\coma 2\,\log(2)\,q\,\min\{\sqrt{\gamma_k},1\} \right\}
$$
then
Algorithm \ref{algoalgo} provides a low-rank approximation of $A$ such that  $\|A-\hat A_h(q)\|_2\leq (1+\varepsilon)\,\|A-A_h\|_2$, $|\sigma_i(A)^2 - \sigma_i(\hat A_h)^2|\leq \varepsilon\,\sigma_h(A)^2 $, for $1\leq i\leq h$. Thus Theorem \ref{teo supermain comple1} implies that, (for fixed $A$ and $X$), a number $
O(\, (\min\{ \,2\,\log(1+\gamma_j)\coma 2\,\sqrt{2}\,\min\{\sqrt{\gamma_k},1\}\,\} ) ^{-1}\,\log(\frac{2\,\sqrt{2}\,C_F}{\varepsilon})\, )$
of iterations in Algorithm \ref{algoalgo} are required to achieve the  previously mentioned approximation errors.
On the other hand, we remark that it is well known that an analysis of convergence such as that derived in Theorem \ref{coro para comp abc1} (for dominant subspaces) or Theorem \ref{teo supermain comple1} can be used to obtain upper bounds for the approximation errors in the randomized setting (for example, see Saibaba's work in the context of the Subspace Iteration Method \cite{Sai19}). We will consider these issues elsewhere.
\end{rem}

\begin{rem}\label{rem coments finales}
As a final comment, we point out that the present results together with the results from \cite{D19}
do not cover the {\it complete picture} of the convergence analysis of deterministic block Krylov methods. 
For example, assume that we are interested in computing an approximation of an $h$-dimensional dominant 
subspace of the matrix $A$ in the case that there is a {\it small} singular gap $\sigma_{h}>\sigma_{h+1}$, for which 
$\gamma_h\approx 0$ (so $\frac{\sigma_h}{\sigma_{h+1}}\approx 1$). This situation corresponds, for example, to the case where $\sigma_h$ lies in a cluster of singular values $\sigma_{j+1}>\ldots>\sigma_h>\ldots>\sigma_k$, for $1\leq j<h<k<\text{rank}(A)$ and $\sigma_{j+1}-\sigma_{k}\approx 0$. In this case 
\cite[Theorem 2.1]{D19} exhibits a rather slow speed of convergence with respect to the power parameter (number of iterations) $q\geq 0$. 
Although our approach does not apply directly to the case where $\sigma_h$ lies in a cluster of singular values, it seems that our results and techniques could shed some light on the convergence analysis in this case. Indeed, let us consider the following {\it heuristic argument}: let $A=U\Sigma V^*$ and
assume that  $\sigma_{j+1}\geq \ldots\geq \sigma_h\geq \ldots\geq \sigma_k>0$, for $1\leq j<h<k$ and set 
$\varepsilon=\sigma_{j+1}-\sigma_{k}\approx 0$. Let us construct an auxiliary matrix 
$$A_{\rm aux}=U\Sigma_{\rm aux} V^* \ , \ \ \Sigma_{\rm aux}=\text{diag}(\sigma_1,\ldots,\sigma_j, c,\ldots, c,\sigma_{k+1},\ldots,\sigma_p)$$ where $p=\min\{m,\,n\}$ and $c=1/2(\sigma_{j+1}+\sigma_{k})$. 
We will use the key fact that the dominant subspaces $\cU_j$ and $\cU_k$ of $A$ and $A_{\rm aux}$ of dimensions $j$ and $k$, respectively, coincide (although $h$-dimensional dominant subspaces of $A$ and $A_{\rm aux}$ do not necessarily coincide). Similarly, the gaps $\gamma_j,\,\tilde \gamma_j>0$ and $\gamma_k,\,\tilde \gamma_k>0$ of $A$ and $A_{\rm aux}$ are {\it comparable}. 

Let $T_{2q}(x)$ be the Chebyshev polynomial of the first kind of 
degree $2q$ and set $\psi(x)=T_{2q}(x/\sigma_{k+1})$ and let
$\Psi_q=V(\psi(\Sigma)\cdot \Sigma^2)V^*X$. Then, by
Theorem \ref{third main result S1} we get that 
$\|\sin\Theta(R(\Psi_q)),\cV_j)\|_{2,F}$ and $\| V_{k,\perp}^* \Psi_q (V_k^* \Psi_q)^\dagger\|_{2,F}$ 
decay (as a function of the number of iterations) in terms of the enveloping gaps $\gamma_j$ and $\gamma_k$. 
Hence, if we now apply Theorem \ref{first main result} to the auxiliary matrix $A_{\rm aux}$ with starting guess $\Psi_q$ and with $t=0$, we see that there exists an $h$-dimensional left dominant subspace $\cS_{\rm aux}'$ of $A_{\rm aux}$ such that 
$$
\|\sin \Theta(R(A_{\rm aux} \Psi_q),\cS_{\rm aux}')\|_{2,F}\leq 2\|\sin \Theta(R(\Psi_q)),\cV_j)\|_{2,F}+\frac{1}{1+\tilde \gamma_k} \|V_{k,\perp}^*\Psi_q\,(V_k^* \Psi_q)^\dagger\|_{2,F}\,.
$$
It seems reasonable to expect that the difference between $\Theta(R(A \Psi_q),\cS_{\rm aux}')$ and $\Theta(R(A_{\rm aux} \Psi_q),\cS_{\rm aux}')$ can be bounded in terms of some 
measure of the {\it spread} of the cluster $ \sigma_j\geq \ldots\geq \sigma_k$. Assuming this last fact and 
putting together the previous estimates, we now conclude that 
$$\|\sin \Theta(\cK_{q+1}, \cS_{\rm aux}')\|_{2,F} \leq \|\sin \Theta(R(A \Psi_q),\cS_{\rm aux}')\|_{2,F}$$
where the upper bound would decay in terms of the gaps $\gamma_j$ and $\gamma_k$ and the power parameter; here we have used that $R(A\Psi_q)\subset \cK_{q+1}=R(K_{q+1}(A,X))$. 

At this point, we remark that the subspaces $\cS_{\rm aux}'=\cS_{\rm aux}'(q)$ are
 not necessarily  dominant subspaces of $A$. Yet, the subspaces $\cS_{\rm aux}'$ satisfy that there exist
 $(h-j)$-dimensional subspaces $\cU\subset \cU_k\ominus \cU_j$ such that $\cS_{\rm aux}'=\cU_j\oplus \cU$. Using this last fact we could show that $\|A-P_{\cS_{\rm aux}'} A\|_{2,F}$ can be bounded by $\|A-A_j\|_{2,F}+\|(\sigma_{\ell+j}-c)_{\ell=1}^{k-j}\|_{2,F}$, where $\|A-A_j\|_{2,F}\approx \|A-A_h\|_{2,F}$ and $\|(\sigma_{\ell+j}-c)_{\ell=1}^{k-j}\|_{2,F}\approx 0$, because we are assuming that $\sigma_{j+1}\geq \ldots\geq \sigma_k$ forms a cluster of singular values. That is, even when $\cS_{\rm aux}'$ 
is not an $h$-dimensional left dominant subspace of $A$, it would behave much like one. 
 Thus, in case $\gamma_h>0$ and $\gamma_h\approx 0$ but with significant gaps $\gamma_j$ and $\gamma_k$, we would conclude the existence of the subspaces $\cS_{\rm aux}'(q)$ that become close to $\cK_{q+1}$ rather fast, as $q\geq 0$ increases. In this case $\cS_{\rm aux}'$ could be used to study the low-rank approximations obtained from Algorithm \ref{algoalgo} (similarly to the way that we have used the existence of a dominant subspace of $A$ lying close to $\cK_{q+1}$ to study low-rank approximations obtained from Algorithm \ref{algoalgo}). We will consider this type of analysis elsewhere.
\end{rem}

\section{Proofs of the main results}\label{sec proofs}

In this section we present detailed proofs of our main results. Some of our arguments make use of some
basic facts from matrix analysis, that we develop in Section \ref{apendixity} (Appendix). We begin by recalling the notation introduced so far; then we consider some general facts about block Krylov spaces, principal angles and principal vectors between subspaces that are needed for developing the proofs below.

\medskip

\begin{nota}\label{nota21} \rm We keep the notation and assumptions introduced so far; hence, we consider: 
\ben
\item $A\in\K^{m\times n}$ with singular values $\sigma_1\geq \ldots\sigma_p\geq 0$, with $p=\min\{m,n\}$.
\item $1\leq h\leq \text{rank}( A)\leq p$; moreover, 
we let $0\leq j(h)< h$ be given by
$$j=j(h)=\max\{ 0\leq \ell < h\ : \ \sigma_\ell> \sigma_h\}<h$$ where we set $\sigma_0=+\infty$ and 
$$k=k(h)=\max\{ 1\leq \ell\leq \text{rank}( A) \, : \, \sigma_\ell=\sigma_h\}\geq h\,.$$ 
\item A starting guess $X\in\K^{n\times r}$ such that $(A,X)$ is $h$-compatible; that is, we assume that
there exists
an $h$-dimensional right dominant subspace $\cS$ of $ A$ such that $ \Theta(R( X),\cS)<\frac{\pi}{2} I$. In this case, $\dim X^*\cS=h$; in particular $r\geq \text{rank}(X)\geq h$.
\item  For any $\ell\geq 0$ we consider $K_\ell=K_\ell(A,X)$
constructed in terms of $A$ and $ X$ as in Eq. \eqref{eq defi Krylov2}, that is
$$
K_\ell=K_\ell( A, X)=  (\,A  X \quad ( A  A^*) A  X\quad \ldots\quad 
( A  A^*)^\ell A  X\,) \in  \K^{m\times (\ell+1)r}
$$
 and consider the block Krylov space $\cK_\ell=\cK_\ell( A, X)=R(K_\ell)$.
\item $A=U  \Sigma  V^*$ a SVD of $A$. Given $1\leq \ell\leq \text{rank}(A)$ we consider the partitions
\begin{equation}\label{eq decomp sigma y otros}
 \Sigma=\begin{pmatrix}  \Sigma_\ell & \\ &  \Sigma_{\ell,\perp}\end{pmatrix}\ , \ \ 
 U=\begin{pmatrix}  U_\ell & 
 U_{\ell,\perp}\end{pmatrix}\ , \ \ 
 V=\begin{pmatrix}  V_\ell & 
 V_{\ell,\perp}\end{pmatrix}\,.
\eeq
Since the pair $(A,X)$ is $h$-compatible then we further assume that $R(V_h^*X)=R(V_h)$; this can always be done by choosing a convenient SVD of $A$.
\een
\end{nota}

 Notice that for every polynomial $\varphi(x)\in \K[x]$ of degree at most $\ell$ we get that the range of 
$\varphi( A  A^*) A  X\in\K^{m\times r}$ is contained in $\cK_\ell$. In terms of a SVD of $ A$, we get that 
$$
\varphi( A  A^*) A  X=  U \varphi( \Sigma^2 ) \Sigma  V^* X= U \phi( \Sigma ) V^* X
$$ where $\phi(x)=x\,\varphi(x^2)\in\K[x]$ is a polynomial of degree at most $2\ell+1$ with odd powers only, and  represents a generalized matrix function (see \cite{ABF16,HBI73}). Here $ \Sigma =\text{diag}(\sigma_1,\ldots,\sigma_p)\in\R^{m\times n}$, where $p=\min\{m,n\}$; hence, $$\phi( \Sigma)=\text{diag}(\phi(\sigma_1),\ldots,\phi(\sigma_p))\in\K^{m\times n}\,.$$ In this case we write
\begin{equation}\label{eq defi super fi1}
 \Phi:= U\phi( \Sigma) V^* X\in \K^{m\times r}\,,
\end{equation} so by the previous facts, $R( \Phi)\subset\cK_\ell$. 
Let $\cS$ be an $h$-dimensional right dominant subspace $\cS$ of $ A$ such that $ \Theta(R( X),\cS)<\frac{\pi}{2} I$ as above. As already mentioned, we can consider a SVD of $ A=U\Sigma V^*$ in such a way that $\cS=\cV_h$. In this case, $R( V_h^* X)=R( V_h^*)$: if we assume further that $\phi(\sigma_i)\neq 0$ for $1\leq i\leq h$ (so $\phi(\Sigma_h)\in \K^{h\times h}$ is an invertible matrix) then $\dim R( \Phi)\geq h$, where $ \Phi$ is defined as in Eq. \eqref{eq defi super fi1}.
We will further consider similar facts related to convenient block decompositions of a SVD of $ A$.

\begin{rem}\label{rem sobre ang princ}
We mention some further properties of the principal angles between subspaces (see Section \ref{sec aux angles} for definitions) that we will need in what follows.  Let $\cS,\,\cT\subset \K^n$ be two subspaces such that $s=\dim\cS\leq \dim\cT=t$.
If $\cS'\subset\cS$ and $\cT\subset \cT'$ are subspaces 
with dimensions $s'$ and $t'$ respectively, then 
$$
\|  \Theta(\cS,\cT')\|_{2,F} \leq \| \Theta(\cS,\cT)\|_{2,F} \ \ , \ \ 
\| \sin  \Theta(\cS,\cT')\|_{2,F} \leq \|\sin  \Theta(\cS,\cT)\|_{2,F} 
$$ and similarly 
$$\| \Theta(\cS',\cT)\|_{2,F}\leq \| \Theta(\cS,\cT)\|_{2,F} \ \ , \ \ 
\|\sin  \Theta(\cS',\cT)\|_{2,F}\leq \|\sin  \Theta(\cS,\cT)\|_{2,F} \,,
$$
which follow from the definition of principal angles (see Eq. \eqref{eq sobre sen ang princ1}). On the other hand, $\dim\cS^\perp=n-s\geq n-t=\dim\cT^\perp$ and therefore, for $1\leq j\leq \min\{s,n-t\}$,
$$
\sin(\theta_{(n-t)-j+1}(\cS^\perp,\cT^\perp))=\sigma_j(( I- P_{\cS^\perp}) P_{\cT^\perp})= \sigma_j( P_\cS( I-  P_{\cT}))= \sin(\theta_{s-j+1}(\cS,\cT))\,. 
$$
By the previous identity, we see that the positive angles between 
$\cS$ and $\cT$ coincide with the 
positive angles between 
$\cS^\perp$ and $\cT^\perp$
 Notice that as a consequence of this last fact 
 we get that 
\beq\label{ang inv por com ort2}
\|  \Theta(\cS,\cT)\|_{2,F}=\| \Theta(\cS^\perp,\cT^\perp)\|_{2,F}\,.
\eeq
\end{rem}

\begin{rem}[Principal vectors between subspaces]\label{rem constr princ vect}  In what follows we shall also make use of the principal vectors associated with the subspaces $\cS,\, \cT\subset \K^n$ such that $s=\dim\cS\leq \dim\cT=t$: indeed, by the construction of the principal angles, we get that there exist orthonormal systems
$\{ u_1,\ldots, u_s\}\subset \cS$ and $\{ v_1,\ldots, v_s\}\subset \cT$ 
such that 
$$\langle  u_i, v_j\rangle =\delta_{ij}\,\cos(\theta_{j}(\cS,\cT))\peso{for} 1\leq i,\,j\leq s\,,$$ where $\delta_{ij}$ is Kronecker's delta function.
We say that $\{ u_1,\ldots, u_s\}$ and $\{ v_1,\ldots, v_s\}$ are the {\it principal vectors (directions)} associated with the subspaces $\cS$ and $\cT$. Notice that the previous facts imply, in particular, that the subspaces $\cS_j=\text{Span}\{ u_1,\ldots, u_j\}\subset\cS$ and 
$\cT_j=\text{Span}\{ v_1,\ldots, v_j\}\subset \cT$ are such that 
$$ \Theta(\cS_j,\cT_j)=\text{diag}(\theta_1(\cS,\cT),\ldots,\theta_j(\cS,\cT))\in\R^{j\times j}\peso{for} 1\leq j\leq s\,.$$ 
In this case we say that $\{ u_1,\ldots,u_j\}\subset \cS$ are {\it principal vectors corresponding to } $(\cS,\cT,j)$ (here it is implicit that $1\leq j\leq s=\min\{\dim \cS,\,\dim \cT\}$).
If $\tilde {\cS}\subset \cS$ and $\tilde{\cT}\subset\cT$ are two $j$-dimensional subspaces then, it follows that 
$ \Theta(\cS_j,\cT_j)\leq  \Theta(\tilde {\cS},\tilde {\cT})$; that is, $\cS_j$ and $\cT_j$ are $j$-dimensional subspaces of $\cS$ and $\cT$ respectively, that are at minimal angular (vector-valued) distance.
\end{rem}

\subsection{Proof of Theorem \ref{first main result} 
}\label{sec prueba teo 2.1}

We now present a proof of Theorem \ref{first main result}.
We divide our arguments into steps.

\begin{proof} {\it Step 1: adapting the DIKM-I theory to the present context}. 
Consider Notation \ref{nota21}. 
By construction $\sigma_{j}> \sigma_{j+1}=\sigma_h= \sigma_{k}$. We first assume that $1\leq j$ and $k<\text{rank}( A)\leq p=\min\{m,n\}$.  Since
$k<\text{rank}( A)$ then $\sigma_h= \sigma_{k}>\sigma_{k+1}>0$. 

We consider $ X\in \K^{n\times r}$ such $\dim(\cX)=s\geq h$ and such that 
there exists a right dominant subspace $\cS\subset\K^n$ of dimension $h$ with
$ \Theta(\cS,\cX)< \pi/2\, I$, where $\cX=R( X)\subset \K^n$ denotes the range of $ X$. 
Consider 
 $ A=  U  \Sigma  V^*$ a full SVD. 
We now consider the partitioning as in Eq. \eqref{eq decomp sigma y otros} corresponding to the index $1\leq k\leq \text{rank}(A)$.
It is worth noticing that $ \Sigma_k$, $R( U_k)=\cU_k$ and $R( V_k)=\cV_k$ do not depend on the particular choice of a SVD of $ A$; also notice that the partition is well defined since $k<\text{rank}( A)$. 
 Let $\phi(x)$ be a polynomial of degree $2t+1$ with odd powers only, such that $\phi(\sigma_1),\, \ldots,\, \phi(\sigma_k)> 0$; hence $\phi( \Sigma_k)$ is invertible.

\smallskip

\noindent {\it Step 2: applying the DIKM-I theory to the adapted model}. We let $\cK_t=\cK_t( A, X)$ denote the block Krylov subspace and let $ P_t\in\K^{m\times m}$ denote the orthogonal projection onto $\cK_t$. Notice that if we let $ \Phi\in\K^{m\times r}$ be as in Eq. \eqref{eq defi super fi1}
then $R( \Phi)\subset \cK_t$. 
Consider for now an arbitrary $h$-dimensional subspace $\cS'\subset\K^m$.
Then 
\begin{equation} \label{eq teo 1 31}
\|\sin  \Theta	(\cK_t,\cS')\|_{2,F}=\|(I- P_t) P_{\cS'}\|_{2,F}\leq 
\|(I- \Phi  \Phi^\dagger ) P_{\cS'}\|_{2,F}
\,,\end{equation}
where we have used that $\dim\cK_t\geq \dim R( \Phi)\geq \dim{\cS'}=h$.
We now consider the decomposition $ \Phi= \Phi_k+ \Phi_{k,\perp}$, where
$$
 \Phi_k\equiv U_k \phi( \Sigma_k) V_k^*  X
\py 
 \Phi_{k,\perp}\equiv U_{k,\perp} \phi( \Sigma_{k,\perp}) V_{k,\perp}^*  X\,.
$$
By \cite[Lemma 4.2]{D19} (see also \cite{Maher92}) we get that 
$$\|(I- \Phi  \Phi^\dagger ) P_{\cS'}\|_{2,F}\leq \| P_{\cS'}- \Phi B\|_{2,F}\peso{for}  B\in \K^{r\times m}\,.$$
By the previous inequality we get that 
\begin{equation}\label{eq comp con fik} 
\|(I- \Phi  \Phi^\dagger ) P_{\cS'}\|_{2,F}\leq 
\|(I- \Phi  \Phi_k^\dagger ) P_{\cS'}\|_{2,F}\,.
\end{equation}
We can further estimate
\begin{equation}\label{eq comp con fikx25} 
\|(I- \Phi  \Phi_k^\dagger ) P_{\cS'}\|_{2,F}\leq 
\|(I- \Phi_k  \Phi_k^\dagger ) P_{\cS'}\|_{2,F}+\| \Phi_{k,\perp}  \Phi_k^\dagger  P_{\cS'}\|_{2,F}\,.
\end{equation}
{\it Step 3: dealing with the fact that $R( V_k^* X)\neq R( V_k^*)$}. 
We now consider the two terms to the right of Eq. \eqref{eq comp con fikx25}.
In our present case, we have to deal with the fact that $R( V_k ^* X)\neq R( V_k^*)$ when $h<k$.
Indeed, since $ \Theta(\cS, \cX)<\pi/2\, I$ and $\cS\subset R( V_k)$ we see that if we let 
$$\cW\equiv R( V_k^* X)= V_k^*\cX\subset \K^k$$ then
$k\geq \dim(\cW):= d\geq h$. 
Let $$\cT=\phi( \Sigma_k )\cW\subset \K^k\,.$$ 
Since, by hypothesis, $\phi( \Sigma_k)\in\R^{k\times k}$ is an invertible matrix then $\dim \cT=d$ and
\begin{equation}\label{eq ran fik}
  \Phi_k  \Phi_k^\dagger = U_k P_\cT U_k^*\,.
\end{equation}
We now consider $\cH'=\text{Span}\{ e_1,\ldots, e_j\}\subset \K^k$, where $j=j(h)\geq 1$ and $\{ e_1,\ldots, e_k\}$ denotes the canonical basis of $\K^k$; notice that $\cH'=R(V_k^*V_j)$. We also consider the principal angles 
$$
 \Theta(\cW,\cH')=\text{diag}(\theta_1(\cW,\cH'),\ldots,\theta_j(\cW,\cH'))\in\R^{j\times j}\,.
$$By Proposition \ref{pro app ang entre subs} we get that 
$$
 \Theta(\cW,\cH')\leq
  \Theta(\cX,\cV_j) <\frac{\pi}{2}\, I\,,
$$
since $\cV_j\subset R( V_k)$ and $ V_k^* \cV_j=\cH'$, and the second inequality above follows from the fact that $\theta_i(\cX,\cV_j)\leq \theta_i(\cX,\cS)<\frac{\pi}{2}$, for $1\leq i\leq j$, since $\cV_j\subset \cS$ (see Section \ref{sec aux angles}).

\smallskip

\noindent {\it Step 4: computing the left dominant subspace $\cS'$}. 
Let $\{ w_1,\ldots, w_j\}\subset\cW$ and $\{ f_1,\ldots, f_j\}\subset \cH'$ be the principal vectors associated with $\cW$ and $\cH'$ (as described in Section \ref{sec aux angles}).
Let $\cW'=\text{Span}\{ w_1,\ldots, w_j\}\subset \cW$; in this case, $ \Theta(\cW,\cH')= \Theta(\cW',\cH')$, by construction. 
Consider the subspace $\cT'=\phi( \Sigma_k)\,\cW'\subset \cT$ so $\dim(\cT')=\dim(\cW')=j=\dim(\cH')$; since $\cH'$ is an invariant subspace of $\phi( \Sigma_k)$ then Proposition \ref{pro app 2} implies that 
$$
\|\sin \Theta(\cT',\cH')\|_{2,F}\leq 
\| \sin \Theta(\cW',\cH')\|_{2,F}=\| \sin \Theta(\cW,\cH')\|_{2,F}
$$
since 
 $\|\phi( \Sigma_k) ( I- P_{\cH'})\|_2\,\|\phi( \Sigma_k)^{-1}\|_2=1$, where we used that $\phi(\sigma_i)\geq \phi(\sigma_k)>0$, for $1\leq i\leq k$ and that $\phi(\sigma_{j+1})=\phi(\sigma_{k})$. 

\smallskip

Let $\cT''=\cT\ominus\cT'$ so $\dim \cT''=d-j$ and $\cT''\subset (\cT')^\perp$.
Since $\dim((\cT')^\perp)= \dim((\cH')^\perp)$, by Eq. \eqref{ang inv por com ort2} we see that 
$$
\| \Theta(\cT'',(\cH')^\perp)\|_{2,F}\leq 
\| \Theta((\cT')^\perp,(\cH')^\perp)\|_{2,F}=\| \Theta(\cT',\cH')\|_{2,F}\leq
\| \Theta(\cW,\cH')\|_{2,F}\,.
$$
Let $\{ y_1,\ldots, y_{d-j}\}\subset \cT''$ and $\{ z_1,\ldots, z_{d-j}\}\subset (\cH')^\perp$ be the principal vectors associated with $\cT''$ and $(\cH')^\perp$.
Then, if we let $\cH''=\text{Span}\{ z_1,\ldots, z_{h-j}\}$ we have that $\dim\cH''=h-j$,
$$\|\sin  \Theta (\cT'',\cH'')\|_{2,F}\leq \|\sin  \Theta (\cT'',(\cH')^\perp)\|_{2,F}\leq \|\sin  \Theta(\cW,\cH')\|_{2,F}\,. $$
On the one hand, we have that $\cT=\cT'\oplus \cT''$; on the other hand, we have that 
$$
\cS':= U_k (\cH'\oplus \cH'' )=\cU_j\oplus   U_k \cH'' \subseteq \cU_k \subseteq \K^m
$$is an $h$-dimensional left dominant subspace of $A$ (see Section \ref{sec dom subs3}). 

\medskip

\noindent{\it Step 5: obtaining some more upper bounds}. 
Since
$$
\|\sin  \Theta (\cT',\cH')\|_{2,F}\coma\| \sin  \Theta (\cT'',\cH'')\|_{2,F}\leq\| \sin  \Theta(\cW,\cH')\|_{2,F}
$$
then Proposition \ref{prop pegoteo de subespacios} implies that 
$\|\sin  \Theta( \cT\coma \cH'\oplus\cH'' )\|_{2,F} \leq 2\,
\| \sin  \Theta(\cW,\cH') \|_{2,F}$.
Hence
\begin{equation}\label{eq teo1 41}
\|(I- \Phi_k \Phi_k^\dagger)  P_{\cS'} \|_{2,F}=\|\sin  \Theta(  U_k\cT\coma \cS' )\|_{2,F} \leq 2\,
\| \sin  \Theta(\cW,\cH') \|_{2,F}\,,
\end{equation}
since $ U_k$ is an isometry and $R( \Phi_k)= U_k\,\cT$ (see Eq. \eqref{eq ran fik}).

\smallskip

By Proposition \ref{pro app 1}, since $\cW=R( V_k^* X)$,
$$
 \Phi_k^\dagger = ( V_k^*  X) ^\dagger ( U_k \phi( \Sigma_k) P_\cW)^\dagger\,.$$
Since $ U_k\in \K^{m\times k}$ has a trivial kernel, we get that 
$$
( U_k \phi( \Sigma_k) P_\cW)^\dagger=(\phi( \Sigma_k) P_\cW)^\dagger ( U_k  P_\cT)^\dagger=(\phi( \Sigma_k) P_\cW)^\dagger  P_\cT U_k^*=(\phi( \Sigma_k) P_\cW)^\dagger  U_k^*$$ 
where we have used Proposition \ref{pro app 1}, that 
$( U_k  P_\cT)^\dagger=( U_k  P_\cT)^*=  P_\cT U_k^*$ since
$  U_k  P_\cT$ is a partial isometry and that $\ker((\phi( \Sigma_k) P_\cW)^\dagger)^\perp=\cT$. The previous facts show that 
$$
 \Phi_{k,\perp}  \Phi_k^\dagger  P_{\cS'} =
 U_{k,\perp} \phi( \Sigma_{k,\perp}) V_{k,\perp}^*  X
( V_k^*  X) ^\dagger (\phi( \Sigma_k) P_\cW)^\dagger  U_k^* 
 P_{\cS'}
$$ so then,
\begin{equation}\label{eq teo1 42}
\| \Phi_{k,\perp}  \Phi_k^\dagger  P_{\cS'} \|_{2,F}\leq \|\phi( \Sigma_{k,\perp})\|_2\,\|\phi( \Sigma_k)^{-1}\|_2\,\|
 V_{k,\perp}^*  X
( V_k^*  X) ^\dagger \|_{2,F}\,.
\end{equation} The result now follows from the estimates in Eqs. \eqref{eq teo 1 31}, \eqref{eq comp con fik} and \eqref{eq comp con fikx25} 
together  with the bounds in Eqs. \eqref{eq teo1 41} and \eqref{eq teo1 42}.

The cases in which $j=0$ or $k=\text{rank}( A)$ can be dealt with similar arguments. Indeed, notice that if $j=0$ then we can take $\tilde \cT\subset \cT$ such that $\dim\tilde \cT=h$, and set $\cS'= U_k \tilde \cT$. By construction, $\cS'\subset R( \Phi_k)$ is a left dominant subspace of $ A$ (in this case any subspace of $\cU_k$ is a dominant subspace of $ A$). Finally, in case $k=\text{rank}( A)$ then $ \Sigma_{k,\perp}=0$ and then $\phi( \Sigma_{k,\perp})=0$,
so that we also get $ \Phi_{k,\perp}=0$.
\end{proof}

Some comments related to the previous proof are in order. We have followed the general lines of the proof of \cite[Theorem 2.1]{D19}. Nevertheless, the assumption in \cite{D19} (i.e., that $R( V_k^* X)= R( V_k^*)$) automatically implies that 
$\|(I- \Phi_k  \Phi_k^\dagger ) P_{\cS'}\|_{2,F}=0$ in Eq. \eqref{eq comp con fik}.
Since we are only assuming that the pair $( A, X)$ is $h$-compatible, our arguments need to include Steps 3, 4 and the first part of Step 5.

With the notation of the proof, a careful inspection of the arguments above shows that the subspace $\cS'$ can be computed explicitly (in terms of several objects that we describe below). We include the following pseudo-code of the construction of $\cS'$, for the convenience of the reader. In the code, we invoke the construction (in terms of convenient singular value decompositions) of principal vectors corresponding to triplets $(\cS,\cT,j)$ as considered in Remark \ref{rem constr princ vect}. We keep using Notation \ref{nota21}.

\smallskip

\begin{algorithm}
\caption{(Construction of $\cS'$ as in Theorem \ref{first main result})}\label{algoalgo2}
\centerline{
}
\begin{algorithmic}[1]
\REQUIRE  $A=U\Sigma V^* \in\K^{m\times n}$ a SVD of $A$, (a compatible) starting guess $ X\in \K^{n\times r}$; rank parameters $0\leq j<h<k\leq \text{rank}(A)$;
power parameter $t\geq 0$; a polynomial $\phi(x)\in\K[x]$ of degree at most $2t+1$ with odd powers only, such that $\phi(\sigma_1),\,\ldots,\,\phi(\sigma_k)>0$.
 Set 
$$
 \cW=R(V_k^*X) \ \ , \ \ \cT=\phi(\Sigma_k)\cW \ \ , \ \ \cH'=R(V_k^* V_j)\,.
$$
\ENSURE An $h$-dimensional left dominant subspace $\cS'$ of $A$.
\STATE  Compute principal vectors $\{w_1,\ldots,w_j\}\subset \cW$ corresponding to $(\cW,\,\cH',j)$. 
\STATE  Set $\cW'=\text{Span}\{w_1,\ldots,w_j\}\subset \cW$ and compute $\cT''=\cT\ominus \phi(\Sigma_k)\cW'$.
\STATE  Compute principal vectors $\{z_1,\ldots,z_{h-j}\}\subset (\cH')^\perp$ corresponding to $(\cT'',\,(\cH')^\perp,h-j)$. 
\STATE Set $\cH''=\text{Span}\{z_1,\ldots,z_{h-j}\}$ and $\cS':=U_k(\cH'\oplus \cH'')=\cU_j\oplus U_k(\cH'')$.
\STATE Return: the left dominant subspace $\cS'\subset \K^m$. 
\end{algorithmic}
\end{algorithm}

\subsection{Proofs of Theorems \ref{pro 3.5}, \ref{third main result S1} and  \ref{coro para comp abc1}}\label{sec 4.3}

\begin{proof}[Proof of Theorem \ref{pro 3.5}] Under our present assumptions we have that 
$\Theta(R(X),\cV_j)<\pi/2\,I_j$ i.e. $R(V_j^*X)=\cV_j$. In this case we can argue as in the proof
of \cite[Proof of Theorem 2.1]{D19} and conclude that 
$$
\|\sin\Theta(R(\Psi_q),\cV_j)\|_{2,F}\leq \|\psi(\Sigma_{j,\perp})\cdot\Sigma_{j,\perp} ^2\|_2\,
\|(\psi(\Sigma_{j})\cdot\Sigma_j^2)^{-1}\|_2\, 
 \| V_{j,\perp}^* X( V_j^*  X)^\dagger\|_{2,F}\,.
$$
where we used that $\psi(\sigma_1),\ldots,\psi(\sigma_j)>0$ so that in this case $\psi(\Sigma_{j})\cdot\Sigma_j^2$ is an invertible matrix. Moreover, 
$$
\|\psi(\Sigma_{j,\perp})\cdot\Sigma_{j,\perp} ^2\|_2\,
\|(\psi(\Sigma_{j})\cdot\Sigma_j^2)^{-1}\|_2\, \leq \|\psi(\Sigma_{j,\perp})\|_2\,
\|\psi(\Sigma_{j})^{-1}\|_2\, \left(\frac{\sigma_{j+1}}{\sigma_j}\right)^2\,.
$$
The result now follows from the previous inequalities and the identity $\sigma_{j+1}/\sigma_j=(1+\gamma_j)^{-1}$.
\end{proof}

In what follows we make use of the next result from \cite{D19}.
\begin{lem}[\cite{D19}]\label{DIKM-I theorem 3}  \rm 
Assume that $k<\text{rank}( A)$, so that $\sigma_k>\sigma_{k+1}>0$, and let 
$$\gamma_k=\frac{\sigma_k-\sigma_{k+1}}{\sigma_{k+1}}>0\,.$$
Let $T_{2q}(x)\in\R[x]$ be the Chebyshev polynomial of the first kind of degree $2q$.
Then $T_{2q}(x)$ has even powers only. Furthermore, if we set $\psi(x)=T_{2q}(x/\sigma_{k+1})$ then
\begin{eqnarray*}\psi(\sigma_1)>\ldots>\psi(\sigma_{k+1})=1  \quad ,\quad  
\psi(\sigma_i)\geq \frac{1}{4}\,(1+\gamma_{k})\,{2^{2q\, \min\{\sqrt{\gamma_k}\coma 1\}}}
 \ &,& \ \peso{for} 1\leq i\leq k\,,\end{eqnarray*}
and $|\psi(\sigma_i)|\leq 1$, for $i\geq k+1$.
Hence, 
$$
\|\psi(\Sigma_k)^{-1}\|_2\, \|\psi(\Sigma_{k,\perp})\|_2\leq 
4\,\frac{2^{-2q \, \min\{\sqrt{\gamma_k}\coma 1\}}}{1+\gamma_k} \,.
$$
\qed
\end{lem}
We point out that the inequalities $\psi(\sigma_1)\geq \ldots\geq \psi(\sigma_{k+1})$ in the lemma above are a consequence of the super-linear growth for large input values (i.e., in this case for $x\geq \sigma_{k+1}$) of the gap amplifying Chebyshev polynomials (see \cite{D19}).

\begin{proof}[Proof of Theorem \ref{third main result S1}]
Let $T_{2q}(x)\in\R[x]$ be the Chebyshev polynomial of the first kind of degree $2q$.
Since $\sigma_{k+1}>0$ we can set $\psi(x)=T_{2q}(x/\sigma_{k+1})$. By Lemma \ref{DIKM-I theorem 3}
we get that 
$\psi(\sigma_1)>\ldots>\psi(\sigma_{k+1})=1$ and hence
$$ \| \psi(\Sigma_j)^{-1}\|_2=\frac{1}{\psi(\sigma_j)} \py \|\psi(\Sigma_{j+1})\|_2=\psi(\sigma_{j+1})=\psi(\sigma_k)\,.$$
Thus, Eq. \eqref{suert1} follows from Theorem \ref{pro 3.5} and the identities above.

\pausa

Consider $\Psi_q=V(\psi(\Sigma)\cdot \Sigma^2)V^*X$; then
$$
V_{k,\perp}^* \Psi_q (V_k^*\Psi_q)^\dagger= (\psi(\Sigma_{k,\perp})\cdot \Sigma_{k,\perp}^2)V_{k,\perp}^*X
((\psi(\Sigma_k)\cdot \Sigma_k^2) V_k^*X)^\dagger\,.
$$
Since $\psi(\Sigma_k)\cdot \Sigma_k^2\in\K^{k\times k}$ is invertible, then Proposition \ref{pro app 1} implies that 
$$((\psi(\Sigma_k)\cdot \Sigma_k^2) V_k^*X)^\dagger=(V_k^*X)^\dagger((\psi(\Sigma_k)\cdot \Sigma_k^2) P_{R(V_k^*X)})^\dagger\,.$$
Hence,  the previous identities imply that 
$$
\|V_{k,\perp}^* \Psi_q (V_k^*\Psi_q)^\dagger\|_{2,F}\leq 
\|\psi(\Sigma_{k,\perp})\cdot \Sigma_{k,\perp}^2\|_2 \, \|V_{k,\perp}^*X (V_k^*X)^\dagger\|_{2,F}
\|(\psi(\Sigma_k)\cdot \Sigma_k^2)^{-1}\|_2\,.
$$
Again, by Lemma \ref{DIKM-I theorem 3}, we get that
$\|\psi(\Sigma_{k,\perp})\cdot \Sigma_{k,\perp}^2\|_2\leq \sigma_{k+1}^2$
while 
$\|(\psi(\Sigma_k)\cdot \Sigma_k^2)^{-1}\|_2\leq (\psi(\sigma_k)\,\sigma_k^2)^{-1}$.
Putting everything together, we get that 
$$
\|V_{k,\perp}^* \Psi_q (V_k^*\Psi_q)^\dagger\|_{2,F}\leq \frac{1}{\psi(\sigma_k)}
\, \|V_{k,\perp}^*X (V_k^*X)^\dagger\|_{2,F}\, \left(\frac{\sigma_{k+1}}{\sigma_k}\right)^2\,.
$$
Thus, Eq. \eqref{suert2} now follows from the previous inequalities and the identity $\sigma_{k+1}/\sigma_k=(1+\gamma_k)^{-1}$.

Finally, notice that the first inequality in Eq. \eqref{suert3} was shown in Remark \ref{rem still true}, while the second inequality is a consequence of Lemma \ref{DIKM-I theorem 3}.
\end{proof}
Recall that given an $h$-compatible pair $(A,X)$ we defined $C(A,X,j,k)_{2,F}=C_{2,F}$ as follows: if $h\leq k<\text{rank}(A)$ and $1\leq j$ then
\beq \label{defi constC2}
C_{2,F}(V,X)=C_{2,F}=\max\left \{ 2\,\| V_{j,\perp}^* X( V_j^*  X)^\dagger\|_{2,F}\coma  \| V_{k,\perp}^* X( V_k^*  X)^\dagger\|_{2,F}\right\}\,.
\eeq If $j=0$ we let $C_{2,F}= \| V_{k,\perp}^* X( V_k^*  X)^\dagger\|_{2,F}$; if $k=\text{rank}(A)$ then we set $C_{2,F}=2\,\| V_{j,\perp}^* X( V_j^*  X)^\dagger\|_{2,F}$.

\begin{proof}[Proof of Theorem \ref{coro para comp abc1}]
Assume first that $1\leq j<h\leq k<\text{rank}(A)$; notice that $\sigma_h=\sigma_k >\sigma_{k+1}>0$, since $k<\text{rank}( A)$. To obtain the claimed convergence analysis we follow the strategy described in Remark \ref{rem strat}. 
Let $T_{2q}(x)$ be the Chebyshev polynomial of the first kind of 
degree $2q$ and set $\psi(x)=T_{2q}(x/\sigma_{k+1})$. 
By Theorem \ref{third main result S1}, 
if we let $\Psi_q=V(\psi(\Sigma)\cdot \Sigma^2)V^*X$ then 
\begin{equation}\label{suert12}
\|\sin\Theta(R(\Psi_q),\cV_j)\|_{2,F}\leq \frac{\psi(\sigma_{k})}{\psi(\sigma_{j})}\, 
 \| V_{j,\perp}^* X( V_j^*  X)^\dagger\|_{2,F}\,
\frac{1}{(1+\gamma_j)^2}
\end{equation}
\begin{equation}\label{suert22}
\| V_{k,\perp}^* \Psi_q (V_k^* \Psi_q)^\dagger\|_{2,F}\leq 
\frac{1}{\psi(\sigma_{k})}\, 
 \| V_{k,\perp}^* X( V_k^*  X)^\dagger\|_{2,F}\, \frac{1}{(1+\gamma_k)^2}\,.
\end{equation}
Since $V_h^*\Psi_q=\psi(\Sigma_h)\,\Sigma_h^2\,V_h^*X$ then $R(V_h^* \Psi_q)=R(V_h^*)$ so that the pair $(A,\Psi_q)$ is $h$-compatible and $\dim \Psi_q\geq h$. If we apply Theorem \ref{first main result} to this last pair with $t=0$ and $\phi(x)=x$ and consider Eqs. \eqref{suert12} and \eqref{suert22} above, we conclude that there exists
an $h$-dimensional left dominant subspace $\cS'\subset \K^m$ for $A$ such that 
\begin{eqnarray*}
\|\sin \Theta(\cK_0(A,\Psi_q),\cS')\|_{2,F}&\leq & 2\, \frac{\psi(\sigma_{k})}{\psi(\sigma_{j})}\, 
 \| V_{j,\perp}^* X( V_j^*  X)^\dagger\|_{2,F} \, \frac{1}{(1+\gamma_j)^2}+ \\ \\ & &
\frac{1}{1+\gamma_k} \,
\frac{1}{\psi(\sigma_{k})}\, 
 \| V_{k,\perp}^* X( V_k^*  X)^\dagger\|_{2,F}\, \frac{1}{(1+\gamma_k)^2}\,.
\end{eqnarray*} where we have used the identity $\sigma_{k+1}/\sigma_k=(1+\gamma_k)^{-1}$. 
Since $x^2\,\psi(x)\in \R[x]$ is a degree $2(q+1)$ polynomial with even powers only, then we get that 
$R(\Psi_q)\subset \R(\tilde K_q)$ where
$$
\tilde K_q:= K_q(A^*,AX)=(A^*(AX)\quad (A^*A) A^*(AX) \quad \cdots \quad (A^*A)^q A^*(AX))\in\mathbb K^{n\times (q+1)r}\,.
$$
Thus, $\cK_0(A,\Psi_q)\subset \cK_0(A,\tilde K_q)\subset \cK_{q+1}(A,X)$; using the properties of principal angles and the estimates above, we now see that 
\begin{eqnarray*} 
\|\sin \Theta(\cK_{q+1}(A,X) , \cS' )\|_{2,F}&\leq &\|\sin \Theta(\cK_0(A,\Psi_q),\cS')\|_{2,F} \\
&\leq & C_{2,F} \, \left( \frac{\psi(\sigma_k)}{\psi(\sigma_j)}\,\frac{1}{(1+\gamma_j)^2}
 +  \frac{1}{\psi(\sigma_{k})}\, 
 \frac{1}{(1+\gamma_k)^3}\right)\,,
\end{eqnarray*}
where $C_{2,F}=C_{2,F}(A,X)$ is defined as in Eq. \eqref{defi constC2}.

The case $j=0$ can be treated with similar arguments (the details are left to the reader).
\end{proof}

\begin{rem}[Construction of $\cS'$ as in Theorem \ref{coro para comp abc1}]\label{rem constr teo 37}
A simple inspection of the proof above shows that the $h$-dimensional left dominant subspace $\cS'\subset \K^m$ as in Theorem \ref{coro para comp abc1} can be constructed using Algorithm \ref{algoalgo2} with starting guess 
$\Psi_q=V(\psi(\Sigma)\cdot \Sigma^2) V^*X \in \K^{n\times r}$ and setting $t=0$ (the rest of the parameters remain the same). Notice that this is in accordance with the general strategy described in Remark \ref{rem strat}. 
\end{rem}

\subsection{Proof of Theorems \ref{teo nuevo bb1}, \ref{teo nuevo bb2}, \ref{other main result}
and \ref{teo supermain comple1}.
}\label{sec4.4}

Throughout this section we consider Notation \ref{nota21} and the notation from Theorem \ref{other main result}. 
In particular, we consider Algorithm \ref{algoalgo} with input: $ A\in\K^{m\times n}$, starting guess $ X\in \K^{n\times r}$; moreover, we set our target rank to: $1\leq h\leq $ rank $( A)$.
We let $U_{K}\in\K^{m\times d}$ denote the matrix whose columns form an orthonormal basis of the Krylov space $\cK_{q+1}$ constructed in terms of $ A$ and $ X$, for some fixed $q\geq 0$; further, we let $\hat{ U}_i\in \K^{m\times i}$ denote the matrix whose columns are the top $i$ columns of the output $\hat U_h$ of Algorithm \ref{algoalgo}, for $1\leq i\leq h$. 
We mostly focus on the case in which $1\leq j<h\leq k<\text{rank}(A)$; notice that in this case we have that $\sigma_h=\sigma_k >0$, since $h\leq \text{rank}( A)$. The cases $j=0$ or $k=\text{rank}( A)$ can be treated with similar arguments (the details are left to the reader). 

In what follows we make use of the constants $\delta_{2,F}$ given by: 
for $k<\text{rank}(A)$ then let $T_{2q}(x)$ be the Chebyshev polynomial of the first kind of degree $2q$, $\psi(x)=T_{2q}(x/\sigma_{k+1})$ and set
\begin{equation}\label{eq rec delt}
\delta_{2,F}:=\sqrt 2\, 
C_{2,F} \, \left( \frac{\psi(\sigma_k)}{\psi(\sigma_j)}\,\frac{1}{(1+\gamma_j)}
 +  \frac{1}{\psi(\sigma_{k})}\, 
 \frac{1}{(1+\gamma_k)^2}\right)\,,
\end{equation} where $C_{2,F}$ is defined as in Eq. \eqref{defi constC2}.
In case $j=0$ the first term should be omitted.
In case $k=\text{rank}(A)$ and $1\leq j$, then let $\psi(x)=T_{2q}(x/\sigma_{k})$ and set 
\begin{equation}\label{eq rec delt2}
\delta_{2,F}:= \frac{\sqrt 2}{\psi(\sigma_j)}\,\frac{C_{2,F}}{(1+\gamma_j)}
\,.
\end{equation}

\begin{proof}[Proof of Theorem \ref{teo nuevo bb1}]
We first assume that $1\leq j<h\leq k<\text{rank}(A)$ so $\gamma_k>0$.
Let $T_{2q}(x)$ be the Chebyshev polynomial of the first kind of 
degree $2q$ and set $\psi(x)=T_{2q}(x/\sigma_{k+1})$. 
Consider $\tilde \Psi_q:=U \,(\psi(\Sigma)\cdot \Sigma ) \,V^*X\in \K^{m\times r}$ and notice that 
$A^*\tilde \Psi_q= V(\,(\psi(\Sigma)\cdot \Sigma^2 ) \,V^*X=\Psi_q$.
Arguing as in the proof of Theorem \ref{third main result S1} we get that 
$$
\|\sin\Theta(R(\tilde \Psi_q),\cU_j)\|_{2,F}\leq \frac{\psi(\sigma_{k})}{\psi(\sigma_{j})}\, 
 \| V_{j,\perp}^* X( V_j^*  X)^\dagger\|_{2,F}\,
\frac{1}{(1+\gamma_j)}
\, , 
$$
$$
\|U_{k,\perp}^* \tilde \Psi_q (U_k^*\tilde \Psi_q)^\dagger\|_{2,F}\leq \frac{1}{\psi(\sigma_k)}
\, \|V_{k,\perp}^*X (V_k^*X)^\dagger\|_{2,F}\, \frac{1}{(1+\gamma_k)}\,.
$$
Notice that $U_h^*\tilde \Psi_q=\psi(\Sigma_h)\,\Sigma_h^2 \,V_h^*X$ so that $R(U_h^*\tilde \Psi_q)=R(U_h^*)$ and the pair $(A^*,\tilde \Psi_q)$ is $h$-compatible. Using the previous estimates together with Theorem \ref{first main result} applied to the matrix $A^*$, the starting guess matrix $\tilde \Psi_q$ and $t=0$ (with $\phi(x)=x$) we  get that there exists an $h$-dimensional right dominant subspace $\tilde \cS\subset \mathbb K^n$ of $A$ such that 
\begin{equation}\label{eq agreg1}
\| \sin \Theta(R(\Psi_q),\tilde \cS)\|_{2,F}\leq 
C_{2,F} \, \left( \frac{\psi(\sigma_k)}{\psi(\sigma_j)}\,\frac{1}{(1+\gamma_j)}
 +  \frac{1}{\psi(\sigma_{k})}\, 
 \frac{1}{(1+\gamma_k)^2}\right)=\frac{\delta_{2,F}}{\sqrt{2}}\,,
\end{equation}
 where we have used that $R(A^*\tilde \Psi_q)=R(\Psi_q)$ and $C_{2,F}$ is defined as in Eq. \eqref{defi constC2}.
Hence, by Eq. \eqref{eq agreg1} and our hypotheses we now see that
\beq \label{eq es menor que pisobrecuatro}
 \Theta(R(\Psi_{q}),\tilde \cS)\leq \frac{\pi}{4}\, I\,.
\eeq
 On the other hand, arguing as in the proof of Theorem \ref{coro para comp abc1} we see that $ R(A \Psi_{q})\subset \cK_{q+1}(A,X)$. We now set $A_{\tilde \cS}=A\,P_{\tilde \cS}$, where $P_{\tilde \cS}$ denotes the orthogonal projection onto 
$\tilde \cS\subset \K^n$. In this case we get that $A=A_{\tilde \cS}+(A-A_{\tilde \cS})$ so that 
$A_{\tilde \cS}(A-A_{\tilde \cS})^*=AP_{\tilde \cS}(I-P_{\tilde \cS})A^*=0$. If we let 
$V_{\tilde \cS}\in\K^{n\times h} $ denote the top right singular vectors of $A_{\tilde \cS}$ then
$R(V_{\tilde \cS})=\tilde \cS$. Notice that Eq. \eqref{eq es menor que pisobrecuatro} implies that
$R(V_{\tilde \cS}^*\Psi_{q})=R(V_{\tilde \cS}^*)$ so we can apply Lemma \ref{lem C1} in this context. Hence,
we consider the principal vectors $\{w_1,\ldots,w_h\}\subset \Psi_{q}$, corresponding to the pair of subspaces 
$(R(\Psi_{q}),\tilde \cS)$; we also let $Q\in\K^{n\times h}$ be an isometry with columns $w_1,\ldots,w_h$ so that
$R(AQ)\subset R(A \Psi_{q})\subset  \cK_{q+1}$. Recall that $U_K$ (being the output of Algorithm \ref{algoalgo}
with power parameter set to $q+1$)
denotes the matrix whose columns form an orthonormal basis of the Krylov space $\cK_{q+1}$. Then, the previous facts together with Lemma \ref{lem C1} show that 
\begin{eqnarray*}
\|A_{\tilde \cS}-U_KU_K^* A_{\tilde \cS}\|_{2,F}&\leq& \|(I-AQ(AQ)^\dagger)A_{\tilde \cS}\|_{2,F}= \|A_{\tilde \cS} -AQ(AQ)^\dagger A_{\tilde \cS}\|_{2,F}
\\ &\leq & \|A-A_{\tilde \cS}\|_2\, \|\tan \Theta(R(\Psi_{q}),\tilde \cS)\|_{2,F}\,.
\end{eqnarray*}
On the one hand, $\|A-A_{\tilde \cS}\|_2=\sigma_{h+1}$, since $\tilde \cS$ is an $h$-dimensional right dominant subspace of $A$. On the other hand, Eq. \eqref{eq es menor que pisobrecuatro} also implies that
$$
 \|A-A_{\tilde \cS}\|_2\, \|\tan \Theta(R(\Psi_{q}),\tilde \cS)\|_{2,F}\leq \sigma_{h+1}\, \sqrt 2 \,\|
\sin \Theta(R(\Psi_{q}),\tilde \cS)\|_{2,F}\leq \sigma_{h+1}\,\delta_{2,F}\,,
$$ where we have also used Eq. \eqref{eq agreg1}.% and \eqref{eq es menor que pisobrecuatro}.
\end{proof}

\begin{proof}[Proof of Theorem \ref{teo nuevo bb2}]

\medskip

\noindent We keep using the notation from the proof of Theorem \ref{teo nuevo bb1} above. As a consequence of Theorem \ref{teo nuevo bb1} and Lidskii's inequality for singular values, we get that 
$$
|\sigma_i(A\,P_{\tilde \cS})- \sigma_i(U_KU_K^*A \,P_{\tilde \cS} )|\leq \|
A\,P_{\tilde \cS}- U_KU_K^*A \,P_{\tilde \cS} \|_2\leq \sigma_{h+1}\,\delta_2
\coma 1\leq i\leq h\,.$$ 
Therefore, for $1\leq i\leq h$, we have that 
$$
\sigma_i(A)\geq \sigma_i(U_KU_K^*A )=\sigma_i(\hat A_h)\geq \sigma_i(U_KU_K^*AP_{\tilde \cS})\geq \sigma_i(AP_{\tilde \cS} )-\sigma_{h+1}\,\delta_2\,.
$$ The result now follows from the previous facts and the identities $\sigma_i(AP_{\tilde \cS} )=\sigma_i(A)$, for $1\leq i\leq h$.
\end{proof}

\begin{proof}[Proof of Theorem \ref{other main result}]

\medskip

\noindent  We keep using the notation from the proof of Theorem \ref{teo nuevo bb1} above. 
In particular, we consider the existence of an $h$-dimensional right dominant subspace $\tilde \cS$ for $ A$ that satisfies
Eq. \eqref{eq agreg1}. We now argue as in the proof of \cite[Theorem 2.3.]{D19} and consider the estimate for the Frobenius norm first. Indeed, by \cite[Lemma 8]{BDMM14} we have that 
$$
 A-\hat{ U}_i\hat{ U}_i^*  A= A- U_{K}( U_{K}^*  A)_i\peso{for} 1\leq i\leq h\,,
$$ where $( U_{K}^*  A)_i$ denotes a best rank-$i$ approximation of $ U_{K}^*  A$. By the same result, we also get that $ U_{K}( U_{K}^*  A)_i$ is a best rank-$i$ approximation of $ A$ from $R(U_K)$ in the Frobenius norm i.e.,
\begin{equation}\label{eqsec21}
\| A-  U_{K}( U_{K}^*  A)_i\|_F=\min_{\text{rank}( Y)\leq i}\|
 A-  U_{K} Y\|_F\,.
\end{equation}
We now consider a SVD, $ A= U \Sigma V^*$ such that 
the top $h$ columns of $ V$ span the $h$-dimensional right dominant 
subspace $R( V_h)=\cV_h=\tilde \cS$ 
 (recall that this can always be done). For $1\leq i\leq h$ we set 
$$
 A= A_i+ A_{i,\perp}\peso{where} A_i= U_i  \Sigma_i  V_i^* \quad \text{and}\quad  A_{i,\perp}= A- A_i\,.
$$ Then, by \cite[Lemma 7.2]{D19} we get that 
\begin{equation}\label{eqsec22}
\| A-\hat{ U}_i\hat{ U}_i^* A\|_F^2\leq \| A- A_i\|_F^2+ \|
 A_i-{ U_{K}} U_{K}^* A_i\|_F^2\,.
\end{equation}
Now we bound the second term in Eq. \eqref{eqsec22}. 
Under the present notation, we get that $ \Theta (R( V_h),R(\Psi_{q}))\leq \frac{\pi}{4} \, I_h$, by Eq. \eqref{eq es menor que pisobrecuatro}. Hence, 
$ \Theta (R( V_i), R(\Psi_{q}))\leq \frac{\pi}{4} \, I_i$ (see Section \ref{sec aux angles})
and we see that $ R(V_i^* \Psi_{q})=R( V_i^*)$, for $1\leq i\leq h$. Thus, we can apply Lemma \ref{lem C1} in this context. Hence,
we consider the principal vectors $\{ w_1,\ldots, w_i\}\subset R(\Psi_{q})$ corresponding to the pair
$(R(\Psi_{q}),R( V_i))$. Moreover, we let $ Q\in\K^{n\times i}$ be an isometry with
$R( Q)=\text{Span}\{ w_1,\ldots, w_i\}$ so that $R( A Q)\subset  R(A\Psi_{q})\subset \cK_{q+1}$. 
The previous facts together with Lemma \ref{lem C1} show that 
\begin{eqnarray*}
\| A_i-{ U_{K}} U_{K}^* A_i\|_F&\leq & \|( I- A  Q ( A  Q)^\dagger) A_i\|_F
=\| A_i - A  Q ( A  Q)^\dagger A_i\|_F
\\ &\leq &\| A- A_i\|_2\,\|\tan \Theta (R(\Psi_{q}),R( V_i))\|_F \\ &\leq &
\| A- A_i\|_2\,\|\tan \Theta (R(\Psi_{q}),R( V_h))\|_F\,.
\end{eqnarray*}
By Eq. \eqref{eq es menor que pisobrecuatro} we get that
$ \Theta (R( V_h),R(\Psi_{q}))=\Theta (\tilde \cS,R(\Psi_{q}))\leq \frac{\pi}{4}\,  I$; then,
$$
\|\tan \Theta (R(\Psi_{q}),R( V_h))\|_{F}\leq 
\sqrt 2\, \|\sin \Theta (R(\Psi_{q}),R( V_h))\|_{F} \leq \delta_F
$$
where we have used 
Eq. \eqref{eq agreg1}.
Therefore, the previous inequalities imply that
\begin{equation} \label{eq prueba part1}
\| A-\hat{ U}_i\hat{ U}_i^* A\|_F^2\leq \| A- A_i\|_F^2+ (\sigma_{i+1} \cdot\delta_F)^2\,.
\end{equation} 
 This proves the upper bound in Eq. \eqref{eq conc teo aprox} for the Frobenius norm. To prove the bound for the spectral norm, recall that by \cite[Theorem 3.4.]{Gu15} we get that Eq. \eqref{eq prueba part1} implies that $$
\| A-\hat{ U}_i\hat{ U}_i^* A\|_2^2\leq \| A- A_i\|_2^2+ (\sigma_{i+1} \cdot\delta_F)^2\,,$$
since $\text{rank}(\hat{ U_i}\hat{ U}_i^* A)\leq i$. The upper bound in \eqref{eq conc teo aprox} for the spectral  norm follows from this last fact.
\end{proof}

\begin{proof}[Proof of Theorem \ref{teo supermain comple1}]\label{rem para coro1212}
Let us fix an $h$-compatible pair $(A,X)$ and assume that $0\leq j<h\leq k< \text{rank}(A)$. 
We follow the conventions: $\gamma_0=+\infty$ in case $j=0$.  Let $T_{2q}(x)$ be the Chebyshev polynomial of the first kind of degree $2q$ and set $\psi(x)=T_{2q}(x/\sigma_{k+1})$; in this case, by Theorem \ref{third main result S1} 
we get that 
$$
\frac{\psi(\sigma_{k})}{\psi(\sigma_{j})}\leq 
\frac{1}{(1+\gamma_j)^{2q}} \py
\frac{1}{\psi(\sigma_{k})}\leq 4\, \frac{2^{-2q\,\min\{\sqrt{\gamma_k}\, , \, 1 \}}}{(1+\gamma_k)}\,.
$$ Hence, if we let $\delta_{2,F}$ be defined as in Eq. 
\eqref{eq rec delt} then we get that 
\begin{equation}\label{eq delta vs conc1}
\delta_{2,F}\leq \sqrt{2}\, C_{2,F}\,\left( \frac{1}{(1+\gamma_j)^{2q+1}}+ 4\,\frac{2^{-2q\,\min\{\sqrt{\gamma_k}\, , \, 1 \}}}{(1+\gamma_k)^3}\right)\,.
\end{equation}
Thus, if Eq. \eqref{eq cond conc1} is satisfied then $\delta_2\leq 1$. In this case, we can apply Theorems 
\ref{teo nuevo bb2} and \ref{other main result} and get, for $1\leq i\leq h$:
$$
 \sigma_{i}-\sigma_{h+1}\cdot\delta_2\leq \sigma_i(\hat A_h) \leq \sigma_i \py
\| A-\hat{ U}_i\hat{ U}_i^* A\|_{2,F}\leq \| A- A_i\|_{2,F} + \sigma_{i+1} \cdot \delta_F\,.
$$
Therefore, the result is a consequence of the previous two bounds together with the estimate in 
Eq. \eqref{eq delta vs conc1}.
\end{proof}

\section{Appendix}\label{apendixity}

In this section we include several technical results that were used in the proofs of the main results.
Most of these technical results are elementary and can be found in the literature; we include the versions that are best  suited for our exposition together with their proofs, for the convenience of the reader.

\begin{pro}\label{pro app ang entre subs}
Let $ V\in\K^{n\times k}$ be an isometry and let $\cV',\,\cX\subset \K^n$ be subspaces such that 
$\dim\cX\geq \dim  \cV'=j$, $\cV'\subset (\ker  V^*)^\perp=R( V)$ and $ \Theta(\cX,\cV')< \pi/2\, I_j$. Then $\cW= V^*\cX\subset\K^k$ is such that $\dim\cW\geq j$ and if we let $\cH'= V^*\cV'$ then 
$$
 \Theta(\cW,\cH')\leq  \Theta(\cX,\cV')\in\R^{j\times j}\,.
$$
\end{pro}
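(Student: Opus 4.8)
The plan is to strip off $V^*$ by a unitary-invariance argument, reducing the claim to a statement about a single orthogonal projection, and then to finish with a Courant--Fischer (min--max) estimate. Set $P := VV^*\in\K^{n\times n}$, the orthogonal projection onto $R(V)$. First I would record two routine facts: $V^* = V^*P$ (since $V^*VV^* = V^*$), and $V^*|_{R(V)}\colon R(V)\to\K^k$ is a unitary isomorphism (injective because $R(V)\cap\ker V^* = \{0\}$, onto because $V^*(Vz) = z$, norm-preserving because $\|V^*Vz\| = \|z\| = \|Vz\|$); hence any two subspaces of $R(V)$ have the same dimensions and the same principal angles as their images under $V^*$. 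From $V^* = V^*P$ we get $\cW = V^*\cX = V^*(P\cX)$, and $\cV'\subseteq R(V)$ gives $P\cV' = \cV'$, so $\cH' = V^*(P\cV')$. Applying the previous remark to the subspaces $P\cX$ and $\cV'$ (both contained in $R(V)$) yields $\Theta(\cW,\cH') = \Theta(P\cX,\cV')$, $\dim\cW = \dim(P\cX)$ and $\dim\cH' = \dim\cV' = j$. It therefore remains to prove that $\dim(P\cX)\ge j$ and $\Theta(P\cX,\cV')\le\Theta(\cX,\cV')$.

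For the dimension bound: the hypothesis $\Theta(\cX,\cV') < \frac{\pi}{2}I$ says that all $j$ cosines $\cos\theta_i(\cX,\cV')$ are positive; since these are exactly the singular values of the map $P_{\cV'}|_{\cX}\colon\cX\to\cV'$, that map has rank $j$, i.e. $P_{\cV'}\cX = \cV'$. As $\cV'\subseteq R(P)$ we have $P_{\cV'} = P_{\cV'}P$, whence $\cV' = P_{\cV'}\cX = P_{\cV'}(P\cX)$, and consequently $j = \dim\cV'\le\dim(P\cX)$.

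The heart of the argument is the pointwise inequality $\|P_{P\cX}\,v\|\ge\|P_{\cX}\,v\|$ for every $v\in R(P)$ --- in particular for $v\in\cV'$. To prove it I would start from $\|P_{\cX}v\| = \max_{x\in\cX\setminus\{0\}}|\langle v,x\rangle|/\|x\|$. Since $v\in R(P)$ one has $\langle v,x\rangle = \langle v,Px\rangle$, which vanishes whenever $Px = 0$; hence the maximum is unchanged if we restrict to those $x$ with $Px\ne0$, and for such $x$, $|\langle v,x\rangle|/\|x\| = |\langle v,Px\rangle|/\|x\|\le|\langle v,Px\rangle|/\|Px\|$ because $\|Px\|\le\|x\|$. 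Taking the maximum over these $x$ and using $P\cX\setminus\{0\} = \{Px : x\in\cX,\ Px\ne0\}$ gives $\|P_{\cX}v\|\le\max_{y\in P\cX\setminus\{0\}}|\langle v,y\rangle|/\|y\| = \|P_{P\cX}v\|$.

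Finally I would apply the min--max characterization, valid for $1\le i\le j$ as soon as $\dim\cZ\ge j$,
\[
\cos\theta_i(\cZ,\cV') = \max_{\substack{\cM\subseteq\cV'\\ \dim\cM = i}}\ \min_{\substack{v\in\cM\\ \|v\| = 1}}\ \|P_{\cZ}\,v\|,
\]
once with $\cZ = P\cX$ and once with $\cZ = \cX$. By the pointwise inequality the right-hand side for $P\cX$ is at least that for $\cX$, so $\cos\theta_i(P\cX,\cV')\ge\cos\theta_i(\cX,\cV')$, hence $\theta_i(P\cX,\cV')\le\theta_i(\cX,\cV')$ for $1\le i\le j$; together with the reduction this is exactly $\Theta(\cW,\cH')\le\Theta(\cX,\cV')$. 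The step I expect to require the most care is the reduction in the first paragraph --- verifying that $V^*\cX$ really coincides with $V^*(P\cX)$ and that conjugating by the unitary $V^*|_{R(V)}$ leaves both the principal angles and the relevant dimensions intact, so that $\Theta(\cW,\cH')$ is a genuine $j\times j$ matrix; the min--max part is then routine.
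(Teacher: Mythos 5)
Your proof is correct, and its skeleton coincides with the paper's: both arguments first use the fact that $V^*$ restricts to a unitary between $R(V)=R(VV^*)$ and $\K^k$ to replace $\Theta(\cW,\cH')$ by $\Theta(VV^*\cX,\cV')$, and then reduce the angle comparison to showing that the compression of $P_{VV^*\cX}$ to $\cV'$ dominates that of $P_{\cX}$. Where you diverge is in how that domination is established: the paper forms the positive contraction $D=VV^*P_{\cX}VV^*$, invokes $0\le D\le P_{R(D)}$ together with $P_{\cV'}VV^*=P_{\cV'}$ to get the operator inequality $P_{\cV'}P_{\cX}P_{\cV'}\le P_{\cV'}P_{R(D)}P_{\cV'}$, and concludes by eigenvalue monotonicity; you instead prove the equivalent quadratic-form statement $\|P_{\cX}v\|\le\|P_{VV^*\cX}v\|$ for $v\in R(V)$ directly from the variational formula $\|P_{\cZ}v\|=\max_{z\in\cZ\setminus\{0\}}|\langle v,z\rangle|/\|z\|$ and then apply Courant--Fischer explicitly. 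Your route is slightly more elementary (it avoids the lemma that a positive contraction is dominated by the projection onto its range), at the cost of a longer pointwise computation; your rank argument for $\dim(V^*\cX)\ge j$, via surjectivity of $P_{\cV'}|_{\cX}$, is also a clean alternative to the paper's inequality $P_{\cX}P_{\cV'}P_{\cX}\le P_{\cX}VV^*P_{\cX}$. No gaps.
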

\begin{proof}
First notice that 
$$
 P_\cX  P_{\cV'}  P_\cX\leq 
 P_\cX  V V^* P_\cX\,.
$$ By hypothesis rank$( P_\cX  P_{\cV'}  P_\cX)=j$ which shows that $\dim \cW=\text{rank}( V^* P_\cX)\geq j$. On the other hand, since $ V$ is an isometry then 
$ \Theta(\cW,\cH')= \Theta( V \cW, V \cH')=
 \Theta( V V^*\cX,{\cV'})$.
Consider $ D= V  V^*  P_\cX  V V^* $; then
$R( D)= V V^*\cX$, so $\dim R( D)=\dim\cW\geq j$. Moreover,
$$0\leq  D\leq  P_{R( D)}\implies  P_{\cV'}
 P_{\cX}  P_{\cV'}= P_{\cV'}
 D  P_{\cV'}\leq 
 P_{\cV'}
 P_{R( D)}  P_{\cV'}\,,$$where we used that $ P_{\cV'}  V V^*= P_{\cV'}$.
Then, $ \cos^2  \Theta(\cX,{\cV'})\leq  \cos^2  \Theta( V V^*\cX,{\cV'})\in\R^{j\times j}$ and the result follows from the fact that $f(x)=\cos^2(x)$ is a decreasing function on $[0,\pi/2]$.
\end{proof}

\begin{pro}\label{pro app 2}
Let $ B\in\K^{k\times k}$ be such that $ B= B^*$ and let $\cH',\,\cW'\subset \K^k$ be subspaces such that $ P_{\cH'}  B =  B  P_{\cH'}$, 
$\dim\cH'=\dim\cW'$  
and $\cH',\,\cW'\subset \ker( B)^\perp$. If we let $ B \cW'=\cT'$, 
$$
\|\sin  \Theta(\cH',\cT')\|_{2,F}\leq 
\| B( I- P_{\cH'})\|_2 \,\| B^\dagger\|_2 \|\sin \Theta(\cH',\cW')\|_{2,F}\,.$$
\end{pro}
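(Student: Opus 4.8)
The plan is to reduce the claim to a statement about singular values and then exploit the commutation hypothesis $P_{\cH'} B = B P_{\cH'}$. First I would fix isometries: let $H \in \K^{k\times j}$ with $R(H)=\cH'$ and $W \in \K^{k\times j}$ with $R(W)=\cW'$, where $j=\dim\cH'=\dim\cW'$. Since $P_{\cH'}$ commutes with $B$ and $B=B^*$, the subspace $\cH'$ (and its orthogonal complement) is $B$-invariant, so $B$ decomposes as a block-diagonal operator relative to $\cH' \oplus (\cH')^\perp$. Because $\cH', \cW' \subset \ker(B)^\perp$ and $\dim\cW' = \dim\cH'$, multiplication by $B$ maps $\cW'$ bijectively onto $\cT' = B\cW'$, so $\dim\cT' = j$ as well; this is what makes $\Theta(\cH',\cT')$ a $j\times j$ object matching $\Theta(\cH',\cW')$.

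The core estimate uses the sine characterization from Eq.~\eqref{eq sobre sen ang princ1}: $\|\sin\Theta(\cH',\cT')\|_{2,F} = \|(I-P_{\cT'})P_{\cH'}\|_{2,F} = \|(I-P_{\cH'})P_{\cT'}\|_{2,F}$ (the last equality by symmetry of sines in principal angles when both dimensions equal $j$). The plan is to bound $\|(I-P_{\cH'})P_{\cT'}\|_{2,F}$ by producing, for any vector in $\cT'$, a good approximant in $\cH'$. Given $y = Bw \in \cT'$ with $w \in \cW'$, write $w = P_{\cH'}w + (I-P_{\cH'})w$. Then $y = BP_{\cH'}w + B(I-P_{\cH'})w = P_{\cH'}Bw + B(I-P_{\cH'})w$ using the commutation, so $(I-P_{\cH'})y = (I-P_{\cH'})B(I-P_{\cH'})w$, whence $\|(I-P_{\cH'})y\| \le \|B(I-P_{\cH'})\|_2 \, \|(I-P_{\cH'})w\|$. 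Meanwhile $\|(I-P_{\cH'})w\| = \|(I-P_{\cH'})P_{\cW'}\|$-type quantity applied to $w$, i.e.\ controlled by $\|\sin\Theta(\cH',\cW')\|$, and $\|w\|$ is recovered from $\|y\|$ via $\|w\| \le \|B^\dagger\|_2\,\|y\|$ since $B$ restricted to $(\ker B)^\perp$ has smallest singular value $\|B^\dagger\|_2^{-1}$ and $w \in \ker(B)^\perp$.

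To get the stated inequality in both the spectral and Frobenius norms simultaneously (and with the correct matrix-valued / unitarily-invariant formulation rather than merely a vectorwise bound), I would phrase the argument at the level of matrices: choosing the isometry $W$ whose columns are the principal vectors of $\cW'$ relative to $\cH'$, one has $(I-P_{\cH'})W = W'\sin\Theta$ for an isometry $W'$, and $(I-P_{\cH'})BW = (I-P_{\cH'})B(I-P_{\cH'})W$, so a singular-value (Weyl-type, or "$\sigma_i(CD)\le\|C\|_2\sigma_i(D)$") submultiplicativity argument yields $\sigma_i\big((I-P_{\cH'})BW\big) \le \|B(I-P_{\cH'})\|_2\, \sigma_i\big(\sin\Theta(\cH',\cW')\big)$ for each $i$. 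Then I relate $(I-P_{\cH'})P_{\cT'}$ to $(I-P_{\cH'})BW(BW)^\dagger$ and absorb the factor $\|(BW)^\dagger\|_2 \le \|B^\dagger\|_2$ (valid because $R(W)\subset \ker(B)^\perp$). Applying this entrywise in the singular values and taking spectral or Frobenius norms gives the claim.

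The main obstacle I anticipate is handling the pseudo-inverse factor cleanly: one must verify that $BW$ has full column rank $j$ (so $R(\cT')$ really has dimension $j$ and $(BW)(BW)^\dagger = P_{\cT'}$), and that $\|(BW)^\dagger\|_2 \le \|B^\dagger\|_2$ — this uses $R(W)\subset\ker(B)^\perp$ together with the fact that $B$ acts as an injective map with least singular value $\sigma_{\min}(B\mid_{\ker(B)^\perp}) = \|B^\dagger\|_2^{-1}$, so $\|BWx\|\ge \|B^\dagger\|_2^{-1}\|Wx\| = \|B^\dagger\|_2^{-1}\|x\|$. Once this is in place, the commutation identity $(I-P_{\cH'})B = (I-P_{\cH'})B(I-P_{\cH'})$ does the rest and the singular-value submultiplicativity is routine.
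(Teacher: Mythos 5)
Your proposal is correct and follows essentially the same route as the paper's proof: the commutation identity $(I-P_{\cH'})B=(I-P_{\cH'})B(I-P_{\cH'})$ is used to factor $(I-P_{\cH'})P_{\cT'}=\bigl(B(I-P_{\cH'})\bigr)\bigl((I-P_{\cH'})P_{\cW'}\bigr)(BP_{\cW'})^\dagger$, the pseudo-inverse factor is bounded by $\|B^\dagger\|_2$ using $\cW'\subset\ker(B)^\perp$, and the equality of the three dimensions justifies the symmetric use of the sine formula. The paper works directly with $BP_{\cW'}$ and the explicit formula $(BP_{\cW'})^\dagger=P_{\cW'}B^\dagger P_{\cT'}$ rather than an isometry $W$ and a least-singular-value argument, but this is only a cosmetic difference.
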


\begin{proof}
Notice that $( B P_{\cH'})^\dagger =  P_{\cH'}  B^\dagger =  B^\dagger  P_{\cH'}$. Then,
$$( I- P_{\cH'}) P_{\cT'}=( I- P_{\cH'}) ( B P_{\cW'}) \, ( B P_{\cW'})^\dagger=
( B ( I- P_{\cH'}))\, ( I- P_{\cH'})  P_{\cW'} \, ( B P_{\cW'})^\dagger\,.
$$ 
Also, notice that $( B P_{\cW'})^\dagger =   P_{\cW'}  B^\dagger  P_{\cT'}$; in particular, $\|( B P_{\cW'})^\dagger \|_2\leq \| B^\dagger\|_2$. Finally, since $\dim\cT'=\dim\cW'=\dim\cH'$ the previous facts imply that
$$
\|\sin \Theta(\cH',\cT')\|_{2,F}\leq 
\| B( I- P_{\cH'})\|_2 \,\| B^\dagger\|_2 
\|\sin \Theta(\cH',\cW')\|_{2,F}\,.$$
\end{proof}

\begin{pro}\label{prop pegoteo de subespacios}
Let $\cT',\,\cT''$ and $\cH',\,\cH''$ be pairs of mutually orthogonal subspaces in $\K^k$, such that $\dim(\cH')\leq\dim(\cT')$ and $\dim(\cH'')\leq \dim(\cT'')$.
 Consider the subspaces in $\K^k$ given by the (orthogonal) sums 
$\cT=\cT'\oplus \cT''$ and $\cH=\cH'\oplus \cH''$, so $\dim (\cH)\leq \dim(\cT)$. In this case we have that 
$$
\|\sin \Theta(\cT,\cH)\|_{2,F}\leq \|\sin \Theta(\cT',\cH')\|_{2,F}+\|\sin \Theta(\cT'',\cH'')\|_{2,F}\,.
$$
\end{pro}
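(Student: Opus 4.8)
The plan is to work with the sine of the principal angles expressed via orthogonal projections, using the identity $\|\sin\Theta(\cT,\cH)\|_{2,F}=\|(I-P_\cH)P_\cT\|_{2,F}$ valid since $\dim\cH\le\dim\cT$. First I would decompose $P_\cT=P_{\cT'}+P_{\cT''}$ and $P_\cH=P_{\cH'}+P_{\cH''}$, which is legitimate because the two summands in each pair are mutually orthogonal. Then $(I-P_\cH)P_\cT=(I-P_{\cH'}-P_{\cH''})(P_{\cT'}+P_{\cT''})$, and I would expand this into four terms and regroup them as
$$
(I-P_\cH)P_\cT=(I-P_{\cH'})P_{\cT'}+(I-P_{\cH''})P_{\cT''}-P_{\cH''}P_{\cT'}-P_{\cH'}P_{\cT''}.
$$
By the triangle inequality for $\|\cdot\|_{2,F}$ it then suffices to bound each of the four pieces. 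The first two are exactly $\|\sin\Theta(\cT',\cH')\|_{2,F}$ and $\|\sin\Theta(\cT'',\cH'')\|_{2,F}$ (again using the dimension hypotheses $\dim\cH'\le\dim\cT'$, $\dim\cH''\le\dim\cT''$), so the work is to control the two cross terms $\|P_{\cH''}P_{\cT'}\|_{2,F}$ and $\|P_{\cH'}P_{\cT''}\|_{2,F}$.

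For the cross terms I would exploit the orthogonality relations: since $\cH''\perp\cH'$ we have $P_{\cH''}(I-P_{\cH'})=P_{\cH''}$, hence $P_{\cH''}P_{\cT'}=P_{\cH''}(I-P_{\cH'})P_{\cT'}$, and therefore
$$
\|P_{\cH''}P_{\cT'}\|_{2,F}\le\|(I-P_{\cH'})P_{\cT'}\|_{2,F}=\|\sin\Theta(\cT',\cH')\|_{2,F}.
$$
Symmetrically, using $\cT'\perp\cT''$ one gets $P_{\cH'}P_{\cT''}=P_{\cH'}P_{\cT''}(I-P_{\cT'})$, and then I would pass to adjoints or use the symmetry $\|(I-P_{\cH'})P_{\cT''}\|\ge\|P_{\cH'}P_{\cT''}(I-\cdot)\|$ type bound; more directly, since $\cH'\perp\cH''\supseteq$ is not quite what I want, the cleaner route is: $P_{\cH'}P_{\cT''}=P_{\cH'}(P_{\cT''}-P_{\cT'}P_{\cT''})$? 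That is not obviously helpful. Instead I would observe $\|P_{\cH'}P_{\cT''}\|_{2,F}=\|P_{\cT''}P_{\cH'}\|_{2,F}$ (Frobenius and spectral norms are invariant under adjoint) and $P_{\cT''}P_{\cH'}=P_{\cT''}(I-P_{\cT'})P_{\cH'}$, so $\|P_{\cT''}P_{\cH'}\|_{2,F}\le\|(I-P_{\cT'})P_{\cH'}\|_{2,F}=\|P_{\cH'}(I-P_{\cT'})\|_{2,F}$. This last quantity is $\|\sin\Theta(\cH',\cT')\|_{2,F}$, and since the positive principal angles between $\cH'$ and $\cT'$ agree whichever subspace is listed first (the nonzero singular values of $(I-P_{\cT'})P_{\cH'}$ and $(I-P_{\cH'})P_{\cT'}$ coincide), this equals $\|\sin\Theta(\cT',\cH')\|_{2,F}$.

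Collecting the four bounds gives
$$
\|(I-P_\cH)P_\cT\|_{2,F}\le 2\|\sin\Theta(\cT',\cH')\|_{2,F}+2\|\sin\Theta(\cT'',\cH'')\|_{2,F},
$$
which is the claim. The main obstacle, and the only genuinely delicate point, is the handling of the cross term $\|P_{\cH'}P_{\cT''}\|_{2,F}$: one must carefully use the orthogonality between $\cT'$ and $\cT''$ together with the symmetry of the norms under adjoints to relate it back to $\|\sin\Theta(\cT',\cH')\|_{2,F}$ rather than to an uncontrolled angle between $\cH'$ and $\cT''$. Everything else is routine projection algebra and the triangle inequality; I would also note at the outset that for the Frobenius norm the constant $2$ could in principle be sharpened, but the stated bound is all that is needed later.
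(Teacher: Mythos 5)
Your overall strategy --- decompose the projections, apply the triangle inequality, and absorb the cross terms using the orthogonality $\cT'\perp\cT''$ and $\cH'\perp\cH''$ --- is exactly the paper's, but you have the projections in the wrong order throughout, and this is a genuine gap rather than a notational slip. With $\dim\cH\le\dim\cT$ the correct identity (Eq.~\eqref{eq sobre sen ang princ1}, whose asymmetry the paper explicitly flags) is $\|\sin\Theta(\cT,\cH)\|_{2,F}=\|(I-P_{\cT})P_{\cH}\|_{2,F}$: the projection onto the \emph{smaller} subspace sits on the right. The quantity $\|(I-P_{\cH})P_{\cT}\|_{2,F}$ you start from is strictly larger whenever $\dim\cH<\dim\cT$, since $(I-P_\cH)P_\cT$ has rank at least $\dim\cT-\dim\cH$ and acquires singular values equal to $1$ that are not sines of principal angles. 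The same error recurs when you identify $\|(I-P_{\cH'})P_{\cT'}\|_{2,F}$ with $\|\sin\Theta(\cT',\cH')\|_{2,F}$: if $\cH'\subsetneq\cT'$ then all principal angles vanish, so $\|\sin\Theta(\cT',\cH')\|_{2}=0$, yet $\|(I-P_{\cH'})P_{\cT'}\|_{2}=1$. Likewise, your claim that the nonzero singular values of $(I-P_{\cT'})P_{\cH'}$ and $(I-P_{\cH'})P_{\cT'}$ coincide fails for unequal dimensions (same counterexample). Since the proposition is invoked in the paper with $\dim\cH''$ possibly strictly less than $\dim\cT''$, the unequal-dimension case cannot be dismissed.

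The repair is to transpose the whole computation: start from $\|(I-P_\cT)P_\cH\|_{2,F}$, expand to obtain $(I-P_{\cT'})P_{\cH'}+(I-P_{\cT''})P_{\cH''}-P_{\cT''}P_{\cH'}-P_{\cT'}P_{\cH''}$, recognize the first two norms as the correctly oriented sines, and bound the cross terms by $\|P_{\cT''}P_{\cH'}\|_{2,F}\le\|(I-P_{\cT'})P_{\cH'}\|_{2,F}$ and $\|P_{\cT'}P_{\cH''}\|_{2,F}\le\|(I-P_{\cT''})P_{\cH''}\|_{2,F}$, using $P_{\cT''}\le I-P_{\cT'}$ and $P_{\cT'}\le I-P_{\cT''}$ (equivalently $P_{\cT''}=P_{\cT''}(I-P_{\cT'})$, the trick you already use). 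This is precisely the paper's proof; indeed your handling of $\|P_{\cH'}P_{\cT''}\|_{2,F}$ via adjoints already lands on the correct quantity $\|(I-P_{\cT'})P_{\cH'}\|_{2,F}$ --- it just needs to be applied consistently to all four terms starting from the correctly oriented left-hand side.
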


\begin{proof}
We will make use of the following fact: given two subspaces $\cR,\,\cS\subset \mathbb K^k$ of the same dimension $d$ then the non-zero singular values of $P_\cR-P_\cS$ coincide with the non-zero entries of $(\sin(\theta_d),\sin(\theta_d),\ldots,\sin(\theta_1),\sin(\theta_1))$, where $\Theta(\cR,\cS)=\text{diag}(\theta_1,\ldots,\theta_d)$. In particular, $\|\sin \Theta(\cR,\cS)\|_{2}=\|P_\cR-P_\cS\|_2$ and 
$\sqrt{2}\, \|\sin \Theta(\cR,\cS)\|_{F}=\|P_\cR-P_\cS\|_F$.

Let $\cT'_0\subset \cT'$ and $\cT''_0\subset \cT''$ be such that $\dim\cT'_0=\dim\cH'$, $\dim\cT''_0=\dim\cH''$ and $\Theta(\cT',\cH')=\Theta(\cT'_0,\cH')$, $\Theta(\cT'',\cH'')=\Theta(\cT''_0,\cH'')$.
Let $\cT_0=\cT'_0\oplus \cT''_0\subset \cT$; since $\dim \cH=\dim\cT_0$ we get that 
$\Theta(\cH,\cT)\leq \Theta (\cH,\cT_0)$. The previous remarks show that we can assume further that 
$\dim \cT'=\dim \cH'$ and $\dim \cT''=\dim \cH''$, so that $\dim \cT=\dim\cH$. In this case we have that
\begin{eqnarray*}
\|\sin \Theta(\cT,\cH)\|_2&=&\| P_\cT-P_\cH\|_2\leq \| P_{\cT'}-P_{\cH'}\|_2+
\| P_{\cT''}-P_{\cH''}\|_2\\ &=&\|\sin \Theta(\cT',\cH')\|_2+\|\sin \Theta(\cT'',\cH'')\|_2\,,
\end{eqnarray*}
where we used that $P_\cT=P_{\cT'}+P_{\cT''}$ and $P_\cH=P_{\cH'}+P_{\cH''}$. The Frobenius norm 
can be handled similarly.
\end{proof}

\begin{pro}\label{pro app 1}
Let $ B\in\K^{p\times q}$ and let $C\in\K^{q\times r}$ with 
$R( C)=\cV\subset\K^q$ such that 
$\cV\subset \ker  B^\perp$. Then $$( B C)^\dagger=
 C^\dagger ( B P_\cV)^\dagger\,.$$
\end{pro}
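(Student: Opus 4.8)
The plan is to verify the pseudo-inverse identity $(BC)^\dagger = C^\dagger (B P_\cV)^\dagger$ by checking the four defining Moore--Penrose conditions for the candidate $Z := C^\dagger (B P_\cV)^\dagger$, or, more efficiently, by reducing to a known product rule. The cleanest route uses the classical fact that $(MN)^\dagger = N^\dagger M^\dagger$ holds whenever $R(N) \subseteq R(M^*)$ and $R(M^*N)$ has full column rank conditions are met; but rather than invoking a precise citation, I would argue directly. First I would observe that $C^\dagger = C^* (CC^*)^\dagger$ and that $P_\cV = CC^\dagger = (CC^*)(CC^*)^\dagger$ is the orthogonal projection onto $\cV = R(C)$, so $C C^\dagger = P_\cV$ and $C^\dagger C = P_{R(C^*)}$. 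The key structural input is the hypothesis $\cV \subseteq \ker(B)^\perp = R(B^*)$, which guarantees that $B$ acts injectively on $\cV$, hence $B P_\cV$ and $BC$ have the same range, namely $B(\cV)$.

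Next I would write $BC = (B P_\cV) C$, using $P_\cV C = C$ (since $R(C) = \cV$). So the claim becomes $((B P_\cV) C)^\dagger = C^\dagger (B P_\cV)^\dagger$. Now set $M = B P_\cV$ and note two facts: (i) $R(C) = \cV = R(M^*)$, because $M^* = P_\cV B^*$ has range $P_\cV(R(B^*)) = \cV$ (using $\cV \subseteq R(B^*)$); and (ii) $R(M) = B(\cV)$, while $M$ restricted to $\cV$ is injective. Under condition (i) — that is, $R(C) = R(M^*)$ — the reverse-order law $(MC)^\dagger = C^\dagger M^\dagger$ is valid. I would either cite this standard reverse-order law or prove it inline by checking the four Penrose axioms for $Z = C^\dagger M^\dagger$: the computations $MC Z MC = MC$, $Z MC Z = Z$, and the symmetry of $MCZ$ and $ZMC$ all follow from $C^\dagger C = P_{R(C^*)} = P_{R(M^*)} = M^\dagger M$ (this intermediate equality $P_{R(C^*)} = M^\dagger M$ is exactly where hypothesis (i) enters), together with $C C^\dagger = P_\cV$ and $M M^\dagger = P_{R(M)}$ and the fact that $M M^\dagger M = M$ absorbs the projection $P_\cV = C C^\dagger$ since $M P_\cV = M$.

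I anticipate the main obstacle to be handling the pseudo-inverses carefully when the matrices are genuinely rank-deficient: neither $B$ nor $C$ need have full rank, so one cannot cancel freely, and the identity $C^\dagger C = P_{R(C^*)}$ must be used in place of $C^\dagger C = I$. The crux is showing $R(C^*) = R((B P_\cV)^*)$; concretely, $R((BP_\cV)^*) = R(P_\cV B^*) = P_\cV\big(R(B^*)\big)$, and since $\cV \subseteq R(B^*)$ we get $P_\cV(R(B^*)) \supseteq P_\cV(\cV) = \cV$, while trivially $P_\cV(R(B^*)) \subseteq \cV$; hence it equals $\cV = R(C) \supseteq \ldots$ wait, I want $R(C^*)$, not $R(C)$ — but the projection $C^\dagger C$ onto $R(C^*)$ is what pairs with $M^\dagger M$, which is the projection onto $R(M^*) = \cV$; and $C C^\dagger$ projects onto $R(C) = \cV$ as well, so the two relevant projections I actually need are $CC^\dagger = P_\cV = M P_\cV M^\dagger \cdot(\ldots)$, i.e. the range-side projections agree, which is the real engine. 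So in the write-up I would be careful to track exactly which projections (domain-side versus range-side) appear in each Penrose axiom and confirm each equals either $P_\cV$ or $P_{R(C^*)}$ consistently. Once those bookkeeping identities are pinned down, the verification is routine.
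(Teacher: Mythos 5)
Your proposal is correct and follows essentially the same route as the paper: both rest on the key observation that $R((BP_\cV)^*)=\cV=R(C)$ (equivalently $\ker(BP_\cV)=\cV^\perp$), and both then verify the Penrose conditions for $Z=C^\dagger(BP_\cV)^\dagger$ --- the paper by computing $BC\,Z=P_{B\cV}$ and $Z\,BC=P_{\ker C^\perp}$ directly, you by packaging it as the reverse-order law $(MN)^\dagger=N^\dagger M^\dagger$ under $R(N)=R(M^*)$, which is a valid sufficient condition here. The one slip is the line $C^\dagger C=P_{R(C^*)}=P_{R(M^*)}$ in your second paragraph, which is dimensionally inconsistent (these projections act on $\K^r$ and $\K^q$ respectively); the correct engine, which you do identify in your final paragraph, is $CC^\dagger=P_\cV=M^\dagger M$.
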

\begin{proof}
In this case $R( B C)= B\cV$ and $\ker  B C=\ker  C$. Moreover,
$$ B C  C^\dagger ( B P_\cV)^\dagger=
 B P_\cV( B P_\cV)^\dagger= P_{ B\cV}\py
$$ 
$$
  C^\dagger ( B P_\cV)^\dagger  B C  =
 C^\dagger ( B P_\cV)^\dagger  B P_\cV  C  
= C^\dagger  P_{\ker( B P_\cV)^\perp} C  = P_{\ker  C^\perp}\,,
$$ where we used that $\ker( B P_\cV)=\cV^\perp$, since $\cV\subset \ker B^\perp$. 
\end{proof}

\smallskip

Let $ C\in\K^{m\times c}$ have rank $p$. For $1\leq i\leq p$ we define 
$$\cP^{\xi}_{ C,\,i}( A)=  C\cdot \text{argmin}_{\text{rank}( Y)\leq i} \| A- C Y\|_\xi \peso{for} \xi=2,F\,.$$
Due to the optimality properties of the projection $ C C^\dagger$ (see \cite{Gu15}) we get that
\begin{equation} \label{eq opt de cc+ vs pc}
\| A- C C^\dagger A\|_{\xi}\leq \| A-\cP^{\xi}_{ C,\,i}( A)\|_{\xi} \peso{for} \xi=2,F\,.
\end{equation}
The following result is \cite[Lemma C.5]{WZZ15} (see also \cite{BDMM14}).
\begin{lem}[\cite{WZZ15}]\label{lem C5}
Let $ A\in \K^{m\times n}$  and consider a decomposition $ A= A_1+ A_2$, with $\text{rank}( A_1)=i$. Let $ V_1\in \K^{n\times i}$ denote the top right singular vectors of $ A_1$. Let $ Z\in\K^{n\times p}$ be such that 
$\text{rank}( V_1^* Z)=i$ and let $ C= A Z$. 
Then $\text{rank}( C)\geq i$ and
$$
\| A_1-\cP^{\xi}_{ C,\,i}( A_1)\|_{\xi}\leq \| A_2 Z( V_1^* Z)^\dagger\|_{\xi} \peso{for} \xi=2,F\,.
$$
\end{lem}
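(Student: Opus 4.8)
The plan is to derive the estimate straight from the definition of $\cP^{\xi}_{ C,\,i}( A_1)$ as a minimizer: it is enough to exhibit a single matrix $ Y$ with $\text{rank}( Y)\le i$ for which $\| A_1- C Y\|_\xi\le \| A_2 Z( V_1^* Z)^\dagger\|_\xi$. First I would record the elementary facts about $ V_1$: since its columns are the top right singular vectors of $ A_1$ and $\text{rank}( A_1)=i$, the matrix $ V_1$ is an isometry whose range is the row space of $ A_1$, so $ V_1^* V_1= I_i$ and $ A_1 V_1 V_1^*= A_1$. Next, because $\text{rank}( V_1^* Z)=i$, the $i\times p$ matrix $ V_1^* Z$ has full row rank, hence $( V_1^* Z)( V_1^* Z)^\dagger= I_i$; consequently $ A_1 Z( V_1^* Z)^\dagger= A_1 V_1 V_1^* Z( V_1^* Z)^\dagger= A_1 V_1$, and therefore
$$
 C( V_1^* Z)^\dagger= A Z( V_1^* Z)^\dagger= A_1 V_1+ A_2 Z( V_1^* Z)^\dagger\,.
$$

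For the rank assertion I would observe that $R( C)\supseteq R\big( C( V_1^* Z)^\dagger\big)=R\big( A_1 V_1+ A_2 Z( V_1^* Z)^\dagger\big)$. In the situation where the lemma is applied, $ A_1$ is a truncated singular value decomposition of $ A$, so $ A_1^* A_2=0$; then the column space of $ A_1 V_1$ is orthogonal to that of $ A_2 Z( V_1^* Z)^\dagger$, and since $ A_1=( A_1 V_1) V_1^*$ gives $\text{rank}( A_1 V_1)=\text{rank}( A_1)=i$, we get $\text{rank}( C)\ge i$.

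For the main inequality I would take $ Y:=( V_1^* Z)^\dagger V_1^*\in\K^{p\times n}$, so that $\text{rank}( Y)\le\text{rank}\big(( V_1^* Z)^\dagger\big)=i$. Right-multiplying the identity above by $ V_1^*$ yields
$$
 C Y= A Z( V_1^* Z)^\dagger V_1^*= A_1 V_1 V_1^*+ A_2 Z( V_1^* Z)^\dagger V_1^*= A_1+ A_2 Z( V_1^* Z)^\dagger V_1^*\,,
$$
hence $ A_1- C Y=- A_2 Z( V_1^* Z)^\dagger V_1^*$. Since $ V_1^*$ has spectral norm $1$, right multiplication by it does not increase any unitarily invariant norm, so $\| A_1- C Y\|_\xi=\| A_2 Z( V_1^* Z)^\dagger V_1^*\|_\xi\le\| A_2 Z( V_1^* Z)^\dagger\|_\xi$ for $\xi=2,F$. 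Finally, as $\cP^{\xi}_{ C,\,i}( A_1)$ realizes the minimum of $\| A_1- C Y'\|_\xi$ over all $ Y'$ with $\text{rank}( Y')\le i$, evaluating at our $ Y$ gives the claimed bound.

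I do not expect a genuine obstacle: the computation is short. The two places needing attention are that $( V_1^* Z)( V_1^* Z)^\dagger= I_i$ really uses the full-row-rank hypothesis $\text{rank}( V_1^* Z)=i$, and that the conclusion $\text{rank}( C)\ge i$ depends on the orthogonality $ A_1^* A_2=0$ of the decomposition (automatic whenever $ A_1$ is a top-$i$ SVD truncation of $ A$, which is the only way the lemma is used here). The remaining pseudo-inverse manipulations and the norm bound $\| M V_1^*\|_\xi\le\| M\|_\xi$ are routine.
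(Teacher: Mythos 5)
Your proof is correct and is precisely the standard argument behind \cite[Lemma C.5]{WZZ15} (see also \cite{BDMM14}), which the paper cites rather than reproves: exhibit the feasible competitor $Y=(V_1^*Z)^\dagger V_1^*$ of rank at most $i$, compute $CY=A_1+A_2Z(V_1^*Z)^\dagger V_1^*$ using $A_1V_1V_1^*=A_1$ and $(V_1^*Z)(V_1^*Z)^\dagger=I_i$, and invoke the minimality defining $\cP^{\xi}_{C,\,i}$. Your caveat about the rank assertion is also well taken: as literally stated the lemma omits the orthogonality of the splitting (e.g.\ $A_2=-A_1$ gives $C=0$), and supplying $A_1^*A_2=0$ --- which holds in the only way the lemma is used here, with $A_1$ a truncated SVD of $A$ --- is the right way to close that gap.
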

The following is a small variation of \cite[Lemma C.1]{WZZ15}
\begin{lem}\label{lem C1}
Let $ A\in \K^{m\times n}$  and consider the decomposition $ A= A_1+ A_2$, with $ A_1 A_2^*=0$ and $\text{rank}( A_1)=i$. Let $ V_1\in\K^{n\times i}$ and $ V_2\in\K^{n\times (n-i)}$ denote the top right singular vectors of $ A_1$ and $ A_2$ respectively. 
Let $\tilde \cK\subset \K^n$ be a subspace such that $ V_1^*(\tilde \cK)=R( V_1^*)$.
Let $\{ x_1,\ldots, x_i\}\subset \tilde \cK$ denote the principal vectors corresponding to the pair $(\tilde \cK,R( V_1))$ and let $ Q\in\K^{n\times i}$ be an isometry with $R( Q)=\text{Span}(\{ x_1,\ldots, x_i\})\subset \tilde \cK$. Then,
$$
\| A_1-( A Q)( A Q)^\dagger  A_1\|_{2,F}\leq 
\| A-  A_1\|_2\ \|\tan  \Theta(\tilde \cK,R( V_1))\|_{2,F}\,.
$$ 
\end{lem}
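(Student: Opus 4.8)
The plan is to bound $\|A_1 - (AQ)(AQ)^\dagger A_1\|_{2,F}$ by exhibiting a concrete matrix of the form $AQR$ (for some $R\in\K^{i\times?}$) and using the optimality of $(AQ)(AQ)^\dagger$ as an orthogonal projection onto $R(AQ)$, namely $\|A_1 - (AQ)(AQ)^\dagger A_1\|_{2,F} \le \|A_1 - AQR\|_{2,F}$ for any $R$. First I would set up notation: write the SVD pieces $A_1 = U_1\Sigma_1 V_1^*$ (so $V_1$ is the isometry of top right singular vectors), and let $\{x_1,\dots,x_i\}\subset\cK^*$ be the principal vectors for the pair $(\cK^*, R(V_1))$, with $\{v_1,\dots,v_i\}\subset R(V_1)$ the matching principal vectors on the other side, so that $\langle x_\ell, v_m\rangle = \delta_{\ell m}\cos\theta_\ell$ where $\theta_\ell = \theta_\ell(\cK^*, R(V_1))$. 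The hypothesis $V_1^*(\cK^*) = R(V_1^*)$ guarantees all $\theta_\ell < \pi/2$, so each $\cos\theta_\ell > 0$ and the relevant diagonal matrices are invertible; this is what makes $\tan\Theta(\cK^*,R(V_1))$ finite.

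Next I would produce the candidate: with $Q = (x_1\ \cdots\ x_i)$, observe that $V_1^* Q$ is (up to the choice of basis of $R(V_1)$ given by the $v_m$) the diagonal matrix $\cos\Theta(\cK^*,R(V_1))$, hence invertible. The natural choice is to try to recover $A_1 = A_1 V_1 V_1^*$ from $AQ$. Write $A_1 V_1 V_1^* = AQ\cdot\big((V_1^*Q)^{-1} V_1^*\big) - A_2 Q (V_1^*Q)^{-1} V_1^*$, using $A = A_1 + A_2$ and $A_1 = A_1 V_1V_1^*$; so taking $R = (V_1^*Q)^{-1}V_1^*$ gives
\[
A_1 - AQR = -A_2 Q (V_1^*Q)^{-1} V_1^* .
\]
Then $\|A_1 - (AQ)(AQ)^\dagger A_1\|_{2,F} \le \|A_1 - AQR\|_{2,F} = \|A_2 Q (V_1^*Q)^{-1} V_1^*\|_{2,F} \le \|A_2\|_2\,\|Q (V_1^*Q)^{-1}\|_{2,F}$, since $V_1^*$ has orthonormal rows and $\|A_2\|_2 = \|A - A_1\|_2$ (here we use $A_1 A_2^* = 0$, which forces the singular values of $A$ to be the union of those of $A_1$ and $A_2$, so $\|A-A_1\|_2 = \|A_2\|_2$). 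It remains to identify $\|Q(V_1^*Q)^{-1}\|_{2,F}$ with $\|\tan\Theta(\cK^*,R(V_1))\|_{2,F}$: since $Q$ is an isometry, $\|Q(V_1^*Q)^{-1}\|_{2,F} = \|(V_1^*Q)^{-1}\|_{2,F}$ after noting that the columns of $Q(V_1^*Q)^{-1}$ have the same Gram matrix as $(V_1^*Q)^{-1}$... more carefully, $\|Q M\|_{2,F} = \|M\|_{2,F}$ for $Q$ an isometry and any $M$, so $\|Q(V_1^*Q)^{-1}\|_{2,F} = \|(V_1^*Q)^{-1}\|_{2,F} = \|(\cos\Theta)^{-1}\|_{2,F}$. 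That is not quite $\tan$; the fix is to choose $R$ more cleverly, projecting first: replace $R$ by $R = (V_1^*Q)^{-1}V_1^*$ but decompose $A_2 Q = A_2(I - V_1V_1^*)Q + A_2 V_1 V_1^* Q = A_2(I-V_1V_1^*)Q$ since $A_2 V_1 = 0$ (the top right singular vectors of $A_1$ lie in $\ker A_2$ because $A_1A_2^*=0$ implies $R(A_1^*)\perp R(A_2^*)$). Then $\|A_2 Q(V_1^*Q)^{-1}V_1^*\|_{2,F} \le \|A_2\|_2\,\|(I - V_1V_1^*)Q (V_1^*Q)^{-1}\|_{2,F}$, and $(I-V_1V_1^*)Q(V_1^*Q)^{-1}$ has, in the principal-vector bases, the singular values $\sin\theta_\ell/\cos\theta_\ell = \tan\theta_\ell$ — this is the standard fact that $\|(I-P_{R(V_1)})P_{\cK^*}(P_{R(V_1)}P_{\cK^*})^\dagger\| $ realizes the tangents. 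Collecting, $\|A_1 - (AQ)(AQ)^\dagger A_1\|_{2,F} \le \|A-A_1\|_2\,\|\tan\Theta(\cK^*,R(V_1))\|_{2,F}$.

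The main obstacle I anticipate is the bookkeeping in the last step: cleanly showing that $(I-V_1V_1^*)Q(V_1^*Q)^{-1}$ (equivalently the relevant product of projections) has singular values exactly $\tan\theta_\ell$, which requires expressing $Q$ in the principal-vector basis and verifying that the cross term $V_1^*Q$ is diagonal with entries $\cos\theta_\ell$ while $(I-V_1V_1^*)Q$ has the $v_m^\perp$-components of norm $\sin\theta_\ell$. This is routine linear algebra about principal angles (and is essentially \cite[Lemma C.1]{WZZ15}), but it is where a careful choice of orthonormal bases is needed so that the matrices are simultaneously diagonalized. A secondary point to be careful about is justifying $\|A-A_1\|_2 = \|A_2\|_2$ and $A_2 V_1 = 0$ from the orthogonality hypothesis $A_1 A_2^* = 0$; both follow from the fact that orthogonality of the column spaces and of the row spaces of $A_1$ and $A_2$ lets one assemble an SVD of $A$ from SVDs of $A_1$ and $A_2$.
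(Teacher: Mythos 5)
Your proposal is correct and follows essentially the same route as the paper's proof: bound the projection error onto $R(AQ)$ by an explicit rank-$i$ competitor built from $AQ$, reduce to a quantity of the form $\|A_2 Q(V_1^*Q)^\dagger(\cdot)\|_{2,F}$, factor out $\|A_2\|_2=\|A-A_1\|_2$, and identify the remaining factor with $\|\tan\Theta(\cK^*,R(V_1))\|_{2,F}$. The only difference is packaging: the paper delegates the middle steps to Lemma \ref{lem C5} (i.e.\ \cite[Lemma C.5]{WZZ15}) and to the tangent identity of \cite[Lemma 4.3]{D19}, whereas you re-derive them inline — your competitor $AQ(V_1^*Q)^{-1}V_1^*$ is precisely the proof of the former, and your use of $A_2=A_2(I-V_1V_1^*)$ in place of the paper's $A_2=A_2V_2V_2^*$ is an equivalent route to the latter.
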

\begin{proof}
Notice that by construction
$$
 \Theta(R( Q),R( V_1))=  \Theta(\tilde \cK,R( V_1))<\frac{\pi}{2}\, I\,.
$$Then, we get that $\text{rank}( A Q)=i$. Hence, we have that 
\begin{eqnarray*}
\| A_1-( A Q)( A Q)^\dagger  A_1\|_{2,F}&\leq &
 \| A_1-\cP^{2,F}_{ A Q,\,i}( A_1)\|_{2,F}\leq 
\| A_2 Q( V_1^* Q)^\dagger\|_{2,F}\\ &\leq &
\| A_2\|_2\, \| V_2^* Q( V_1^* Q)^\dagger\|_{2,F}\\ &=&
\| A_2\|_2\ \|\tan  \Theta(R( Q),R( V_1))\|_{2,F}\\ &=&
\| A-  A_1\|_2\ \|\tan  \Theta(\tilde \cK,R( V_1))\|_{2,F}\,,
\end{eqnarray*}where  we have used Eq. \eqref{eq opt de cc+ vs pc}, 
Lemma \ref{lem C5}, that the isometry $ V_2$ satisfies that $ A_2= A_2  V_2 V_2^*$  and the identity
$\| V_2^* Q( V_1^* Q)^\dagger\|_{2,F}=\|\tan  \Theta(R( Q),R( V_1))\|_{2,F}
 V_1))\|_{2,F}$, that holds by \cite[Lemma 4.3]{D19}, since $\text{rank}( V_1^* Q)=i$.
\end{proof}

\smallskip

\noindent {\bf Acknowledgment}. We would like to thank the reviewers for several suggestions that helped improve the exposition of the results of the manuscript. Special thanks to Francisco Arrieta Zuccalli for several suggestions
and comments that lead to simplifications of the exposition of the results. This research wa s partially supported by  CONICET (PIP 2016 0525/ PIP 0152 CO), ANPCyT (2015 1505/ 2017 0883) and FCE-UNLP (11X974).
{\small }
\end{document}